\documentclass[11pt]{amsart}
\usepackage[T1]{fontenc}

\usepackage[a4paper,margin=1.08in]{geometry}
\usepackage[square,comma,numbers,sort&compress]{natbib}
\usepackage[colorlinks=true,citecolor=red,linkcolor=blue,urlcolor=blue]{hyperref}
\usepackage{amsmath,amssymb,mathtools,mathrsfs}
\usepackage[shortlabels]{enumitem}
\usepackage{aliascnt}

\numberwithin{equation}{section}

\newtheorem{theorem}{Theorem}[section]

\newaliascnt{lemma}{theorem}
\newtheorem{lemma}[lemma]{Lemma}
\aliascntresetthe{lemma}

\newaliascnt{corollary}{theorem}
\newtheorem{corollary}[corollary]{Corollary}
\aliascntresetthe{corollary}

\newaliascnt{proposition}{theorem}

\aliascntresetthe{proposition}

\newaliascnt{example}{theorem}
\newtheorem{example}[example]{Example}
\aliascntresetthe{example}

\newaliascnt{remark}{theorem}
\newtheorem{remark}[remark]{Remark}
\aliascntresetthe{remark}

\usepackage[nameinlink,noabbrev]{cleveref}
\crefname{theorem}{Theorem}{Theorems}
\Crefname{theorem}{Theorem}{Theorems}
\crefname{corollary}{Corollary}{Corollaries}
\Crefname{corollary}{Corollary}{Corollaries}
\crefname{lemma}{Lemma}{Lemmas}
\Crefname{lemma}{Lemma}{Lemmas}
\crefname{example}{Example}{Examples}
\Crefname{example}{Example}{Examples}
\crefname{remark}{Remark}{Remarks}
\Crefname{remark}{Remark}{Remarks}

\providecommand{\PP}{\mathbb P}
\providecommand{\E}{\mathbb E}
\providecommand{\Var}{\operatorname{Var}}

\providecommand{\bft}{\boldsymbol t}
\providecommand{\bfs}{\boldsymbol s}
\providecommand{\bfv}{\boldsymbol v}

\providecommand{\calH}{\mathcal H}

\newcommand{\bfzero}{\boldsymbol{0}}

\newcommand{\dd}{\,\mathrm{d}}

\title[Gaussian fields with a product term in the variance]{Extremes of Gaussian fields with a product term in the variance}
\author{Svyatoslav Novikov}
\address{Department of Actuarial Science, University of Lausanne, UNIL-Dorigny, 1015 Lausanne, Switzerland}
\email{Svyatoslav.Novikov@unil.ch}
\date{\today}

\begin{document}

\begin{abstract}
	We study the high excursion probability of a centered Gaussian field on a square.
	Writing \(\sigma\) and \(r\) for its standard deviation and correlation function,
	we assume that \(\sigma\) has a unique maximum at the corner
	\(\bfzero=(0,0)\) and
	\[
	1-\sigma(\bft) \sim t_1^\beta+t_2^\beta+t_1^a t_2^a ,
	\qquad \bft=(t_1,t_2)\to\bfzero
	\]
	in \(\mathbb R_+^2\).  The local correlation is assumed to satisfy
	\[
	1-r(\bft,\bfs)\sim |t_1-s_1|^\alpha+|t_2-s_2|^\alpha,
	\qquad 0<\alpha<\beta .
	\]
	This product form of the standard-deviation loss is not covered by the usual
	locally additive assumptions.  In the range \(a<\beta/2\), the classical essential rectangle
	at the variance-loss scale no longer captures the leading contribution; the
	relevant localization becomes side-attached and, in one regime, effectively
	one-dimensional. We determine the corresponding high-level asymptotics,
	including the logarithmic and side-dominated regimes which do not arise in the
	locally additive case.
\end{abstract}

\maketitle

\section{Introduction}

Let \(X(\bft),\ \bft=(t_1,t_2)\in[0,T]^2\), be a centered Gaussian field with continuous sample paths, and write \(\bfzero=(0,0)\).  Its standard deviation and correlation function are denoted by
\[
        \sigma(\bft)=\sqrt{\E\{X^2(\bft)\}},
        \qquad
        r(\bft,\bfs)=\frac{\E\{X(\bft)X(\bfs)\}}{\sigma(\bft)\sigma(\bfs)} .
\]
We also write
\[
        \Psi(u)=\PP\{N(0,1)>u\}
\]
for the standard normal survival function.  We are interested in
\[
        p(u)=\PP\left\{\sup_{\bft\in[0,T]^2}X(\bft)>u\right\},
        \qquad u\to\infty,
\]
under the assumption that \(\bfzero\) is the unique point of maximal variance.  For parameters \(0<\alpha\le2\), \(\beta>\alpha\), and \(a>0\), the local model considered in this paper is
\begin{equation}\label{eq:intro-local-model}
        1-r(\bft,\bfs)\sim |t_1-s_1|^\alpha+|t_2-s_2|^\alpha,
        \qquad
        1-\sigma(\bft)\sim t_1^\beta+t_2^\beta+t_1^a t_2^a .
\end{equation}
The restriction \(\alpha<\beta\) is not imposed for conceptual reasons: it keeps us in the regime where the local block constants have the explicit Pickands form described below, and where the transition caused by the product term is most transparent.  The cases \(\alpha=\beta\) and \(\alpha>\beta\) can also be treated, but they introduce additional local constants or single-point regimes which would be less relevant to the mechanism studied here.

The classical double-sum method of Pickands and Piterbarg says, roughly, that the field should be inspected on correlation blocks of side length
\[
        q_u=u^{-2/\alpha},
\]
where \(q_u\) is the mesh on which \(u^2(1-r)\) is of order one.  A correlation block is just a rectangle whose side lengths are proportional to \(q_u\).  On this scale the standardized field, after recentering at the base point of the block, has a non-degenerate fractional-Brownian local limit.  The corresponding block-exceedance probability is \(\Psi(u)\) times a finite Pickands factor, multiplied by the exponential penalty coming from the variance loss at the base point.  Thus the terms \emph{Pickands block} and \emph{Pickands rectangle} below are only shorthand for these \(q_u\)-scale rectangles in the double-sum decomposition; the formal Pickands constant introduced in \Cref{sec:main-results} is the limiting contribution per unit rescaled length.  We mention Piterbarg constants only to indicate what happens in the regimes not treated here: by this we mean the analogous local constants which appear when the deterministic variance drift and the correlation drift live on the same scale.

When the variance loss is additive, say \(t_1^\beta+t_2^\beta\), the relevant variance scale is \(u^{-2/\beta}\).  Since \(\alpha<\beta\), this scale contains many correlation blocks in each coordinate, and the leading term comes from a two-dimensional array of Pickands rectangles.  In that situation one obtains the familiar order
\[
        u^{4/\alpha-4/\beta}\Psi(u),
\]
up to a constant.  This is the standard local picture behind many asymptotic results for non-stationary Gaussian processes and fields; see, for example, \citet{Pickands1969}, \citet{Piterbarg1996}, and the monograph of \citet{LeadbetterLindgrenRootzen1983}.  Related settings in which local additivity is modified or supplemented include the non-additive dependence framework of \citet{BaiDebickiLiu2025} and the vector-valued locally additive fields of \citet{IevlevKriukov2025}.

The product standard-deviation loss in \eqref{eq:intro-local-model}, especially
the range \(a<\beta/2\), is not covered by the usual locally additive theory.
The reason is geometric.  In the additive model, the set on which the
standard-deviation loss is of order \(u^{-2}\) is essentially the rectangle
\[
t_1=O(u^{-2/\beta}),\qquad t_2=O(u^{-2/\beta}),
\]
which is the classical essential rectangle in the Borell--Piterbarg
localization argument.  In the product model the corresponding localization set is
\[
\left\{\boldsymbol{t}\in\mathbb R_+^2:
u^2\bigl(t_1^\beta+t_2^\beta+t_1^a t_2^a\bigr)=O(1)\right\}.
\]
If \(a<\beta/2\), this set is no longer comparable with a single rectangle at
the additive scale.  Indeed, when both coordinates are of order
\(u^{-2/\beta}\), the product term contributes \(u^{2-4a/\beta}\), which tends
to infinity.  Thus the classical essential rectangle does not capture the
leading contribution; the relevant localization is instead attached to the
coordinate sides.

This change of geometry produces the transition studied below.  The threshold is
\[
a_0=\frac{\alpha\beta}{\alpha+\beta}.
\]
For \(a<a_0\), the leading contribution is one-dimensional and comes from the
two sides \([0,T]\times\{0\}\) and \(\{0\}\times[0,T]\).  For
\(a_0\le a<\beta/2\), the leading contribution is still two-dimensional, but it
comes from asymmetric correlation blocks near the sides and contains a
logarithmic factor.  At \(a=\beta/2\), the logarithm disappears and the product
term enters only through a critical limiting integral.  For \(a>\beta/2\), the
product term is negligible on the additive scale and the classical
two-dimensional asymptotic is recovered.  To the best of our knowledge, the
logarithmic and side-dominated regimes are not covered by the existing
Pickands--Piterbarg asymptotic theory for locally additive variance losses.

We also record a simple deterministic trend extension.  For \(\beta=2\), fixed \(c_1,c_2\ge0\), and \(\boldsymbol c=(c_1,c_2)\), we consider
\[
        p_{\boldsymbol c}(u)=\PP\left\{\sup_{\bft\in[0,T]^2}\bigl(X(\bft)-c_1t_1-c_2t_2\bigr)>u\right\}.
\]
This is included mainly to show how the same geometry interacts with a basic trend.  It is in the spirit of threshold-dependent Gaussian-process results such as those of \citet{BaiDebickiHashorvaJi2018}.  The formulae below show the following for \(\beta=2\).  In the side-dominated regime \(0<a<a_0\), where \(a_0=2\alpha/(\alpha+2)\), the fixed linear trend changes the one-dimensional side constant; in the logarithmic product regime \(a_0\le a<1=\beta/2\), it affects only the bounded, non-logarithmic part of the asymptotic and therefore leaves the coefficient of \(\log u\) unchanged; at the critical point \(a=1\), and in the classical regime \(a>1\), it changes the corresponding two-dimensional constants.

The paper is organized as follows.  \Cref{sec:main-results} states the assumptions, the main tail asymptotics (\Cref{thm:main}), the trend corollary (\Cref{cor:trend}), and examples.  \Cref{sec:auxiliary-lemmas} collects the local Pickands estimates, the double-sum bounds, and the integral asymptotics needed in the proof.  \Cref{sec:proofs} proves the main theorem, the trend corollary, and the auxiliary estimates.

\section{Main results}\label{sec:main-results}

For convenience we recall and fix the notation
\[
        E=[0,T]^2,\qquad \bfzero=(0,0),\qquad
        \Psi(u)=\PP\{N(0,1)>u\},
\]
where \(T>0\) is fixed.  We use the following assumptions.

\begin{enumerate}[label=(A\arabic*)]
\item\label{ass:variance}
The field \(X\) is centered, Gaussian, and has continuous sample paths.  Its standard deviation \(\sigma\) satisfies \(\sigma(\bfzero)=1\), \(\sigma(\bft)<1\) for \(\bft\ne\bfzero\), and, as \(\bft\to\bfzero\),
\begin{equation}\label{eq:variance-assumption}
        1-\sigma(\bft)=\bigl(1+o(1)\bigr)V(\bft),
        \qquad
        V(\bft)=t_1^\beta+t_2^\beta+t_1^a t_2^a,
\end{equation}
where \(\beta>0\) and \(a>0\).  Moreover, for every \(\delta>0\),
\[
        \sup_{\bft\in E\setminus[0,\delta]^2}\sigma(\bft)<1 .
\]
\item\label{ass:correlation}
For some \(\alpha\in(0,2]\), uniformly for \(\bft,\bfs\in[0,\delta]^2\) as \(\delta\downarrow0\),
\begin{equation}\label{eq:correlation-assumption}
        1-r(\bft,\bfs)
        =\bigl(1+o(1)\bigr)
        \left(|t_1-s_1|^\alpha+|t_2-s_2|^\alpha\right).
\end{equation}
\item\label{ass:nondegenerate}
For every \(\varepsilon>0\),
\[
        \sup\{r(\bft,\bfs):\bft,\bfs\in E,\\ |\bft-\bfs|\ge\varepsilon\}<1 .
\]
\end{enumerate}

Let \(B_\alpha\) be a standard fractional Brownian motion with Hurst index \(\alpha/2\), that is,
\[
        \operatorname{Var}(B_\alpha(t)-B_\alpha(s))=|t-s|^\alpha .
\]
For \(S>0\), define
\begin{equation}\label{eq:finite-pickands}
        \calH_\alpha(S)
        =\E\left\{\exp\left(\sup_{t\in[0,S]}\bigl(\sqrt2 B_\alpha(t)-t^\alpha\bigr)\right)\right\},
\end{equation}
and let
\begin{equation}\label{eq:pickands-constant}
        \calH_\alpha=\lim_{S\to\infty}\frac{\calH_\alpha(S)}{S}
\end{equation}
be the Pickands constant.  In the estimates below, a rectangle of side lengths \(S_1q_u\) and \(S_2q_u\), with \(q_u=u^{-2/\alpha}\), contributes asymptotically through the finite factor \(\calH_\alpha(S_1)\calH_\alpha(S_2)\).  Letting the rescaled side lengths tend to infinity replaces these finite factors by the intensity \(\calH_\alpha\) per unit rescaled length.  We also write
\begin{equation}\label{eq:constants-main}
        G_\beta=\int_0^\infty e^{-x^\beta} \dd x=\Gamma\left(1+\frac1\beta\right),
        \qquad
        a_0=\frac{\alpha\beta}{\alpha+\beta},
\end{equation}
and, for the critical product case,
\begin{equation}\label{eq:critical-constant}
        K_\beta
        =\int_0^\infty\int_0^\infty
        \exp\left(-x^\beta-y^\beta-x^{\beta/2}y^{\beta/2}\right)\dd x\dd y .
\end{equation}

\begin{theorem}\label{thm:main}
Assume \ref{ass:variance}--\ref{ass:nondegenerate} and \(0<\alpha<\beta\).  Then, as \(u\to\infty\),
\begin{enumerate}[(i)]
\item if \(0<a<a_0\), then
\begin{equation}\label{eq:main-side}
        p(u)
        \sim
        2\calH_\alpha G_\beta\,
        u^{2/\alpha-2/\beta}\Psi(u);
\end{equation}
\item if \(a_0\le a<\beta/2\), then
\begin{equation}\label{eq:main-log}
        p(u)
        \sim
        \calH_\alpha^2\,
        \frac{2(\beta-2a)\Gamma(1/a)}{a^2\beta}\,
        u^{4/\alpha-2/a}\log u\,\Psi(u);
\end{equation}
\item if \(a=\beta/2\), then
\begin{equation}\label{eq:main-critical}
        p(u)
        \sim
        \calH_\alpha^2 K_\beta\,
        u^{4/\alpha-4/\beta}\Psi(u);
\end{equation}
\item if \(a>\beta/2\), then
\begin{equation}\label{eq:main-classical}
        p(u)
        \sim
        \calH_\alpha^2 G_\beta^2\,
        u^{4/\alpha-4/\beta}\Psi(u).
\end{equation}
\end{enumerate}
\end{theorem}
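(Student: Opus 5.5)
The plan is the double-sum method of Pickands--Piterbarg on the grid of correlation blocks. First localize. By \ref{ass:variance}, $\sup_{E\setminus[0,\delta]^2}\sigma<1$, so Borell--TIS shows that the contribution of $E\setminus[0,\delta]^2$ is exponentially negligible with respect to $\Psi(u)$. We then localize further to the set $D_u=\{\bft: u^2V(\bft)\le \log^2 u\}$. On $[0,\delta]^2\setminus D_u$ we partition into blocks and bound each block by $C\Psi(u)\exp(-\tfrac12 u^2 V)$. The resulting sum is negligible because $u^2V\ge\log^2u$ there and the number of blocks grows only polynomially in $u$.

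Next we tile $D_u$ by rectangles $\Delta_{k,l}=[kSq_u,(k+1)Sq_u]\times[lSq_u,(l+1)Sq_u]$ with $q_u=u^{-2/\alpha}$. Uniformly over these blocks, the standard local Pickands lemma under \ref{ass:correlation} gives
\[
\PP\{\sup_{\Delta_{k,l}}X>u\}\sim \calH_\alpha(S)^2\,\Psi(u)\,\exp\bigl(-u^2V(kSq_u,lSq_u)\bigr).
\]
This uses $1-\sigma=(1+o(1))V$ together with the fact that $u^2(V(\bft)-V(\bfs))\to0$ inside a block. The latter needs care near the axes when $a<1$, since $t_1^a t_2^a$ is not Lipschitz in $t_2$ at $0$. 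The double-sum terms for pairs of distinct blocks are bounded as usual, using \ref{ass:nondegenerate} for distant pairs, by $o(1)$ times the single sum as $S\to\infty$. The boundary rows $k=0$ or $l=0$ are treated in the same way, with the one-sided constants. This reduces the problem to the asymptotics of a Riemann sum:
\[
p(u)\sim \calH_\alpha^2\,\Psi(u)\,q_u^{-2}\int_{\mathbb R_+^2}e^{-u^2V(\bft)}\dd\bft,
\]
at least in regimes (ii)--(iv), where the integrand varies on scales much larger than $q_u$. In regime (iv), substituting $t_i=u^{-2/\beta}x_i$ makes the product term $u^{2-4a/\beta}x^ay^a\to0$, and dominated convergence gives $G_\beta^2$. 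In regime (iii) the same substitution gives exactly $K_\beta$.

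Regime (ii) needs an integral lemma. With $I(u)=\int e^{-u^2(x^\beta+y^\beta+x^ay^a)}\dd x\dd y$, use the symmetry and split at $x=y$. Substitute $y=u^{-2/a}x^{-1}w^{1/a}$, so that the product term becomes $w$, and integrate in $w$ first. This yields $\Gamma(1/a)/a\cdot u^{-2/a}\int x^{-1}e^{-u^2x^\beta}\dd x$, where the $x$-range runs from roughly $u^{-1/a}$ (the diagonal) up to $u^{-2/\beta}$. The logarithm in $x$ has length $(1/a-2/\beta)\log u=\frac{\beta-2a}{a\beta}\log u$. Multiplying by $2$ for the two halves gives $\frac{2(\beta-2a)\Gamma(1/a)}{a^2\beta}u^{-2/a}\log u$, which matches \eqref{eq:main-log}. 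The Riemann-sum approximation requires the $y$-scale $u^{-2/a}/x$ to exceed $q_u$ throughout the range, i.e.\ $x\ll u^{2/\alpha-2/a}$. The upper end $x\approx u^{-2/\beta}$ satisfies this exactly when $a\ge a_0$. At $a=a_0$ the two scales coincide only at the endpoint, which costs $O(1)$ and not $\log u$, so the answer is unchanged.

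In regime (i) the $y$-scale at $x\approx u^{-2/\beta}$ is below $q_u$. The block row $l=0$ then behaves like a one-dimensional Pickands line: near the side $\{t_2=0\}$, one block in $t_2$ captures everything. Summing over $k$ gives $\calH_\alpha G_\beta u^{2/\alpha-2/\beta}\Psi(u)$ per side, and the factor $2$ comes from the two sides. The main obstacle is showing that the blocks with $l\ge1$, together with the intermediate-scale blocks where the $y$-scale is comparable to $q_u$, contribute only $o(u^{2/\alpha-2/\beta})$. The first attempt will bound them by $\sum_{l\ge1}\exp(-u^2 t_1^a(lq_u)^a)$, which for $a<a_0$ is small relative to one block over most of the $t_1$-range. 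The remaining range, small $t_1$ up to $(u^{-2}q_u^{-a})^{1/a}$, has a two-dimensional count that must be checked to be $o(u^{2/\alpha-2/\beta})$. This bookkeeping, and the uniform local lemma near the axes, are where I expect the real difficulty.
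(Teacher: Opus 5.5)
\textbf{Regimes (ii)--(iv).} Here your plan matches the paper's: a union bound and double sum on the $q_u$-grid, a monotone Riemann comparison of the block sum with $q_u^{-2}\int\!\!\int e^{-u^2V}$, and the integral asymptotics. Your diagonal split with $w=u^2x^ay^a$ is a valid alternative to the paper's $Z=XY$ computation. Your condition $x\ll u^{2/\alpha-2/a}$, with only an $O(1)$ loss in $\log x$ at $a=a_0$, is exactly the paper's cutoff estimate \eqref{eq:integral-with-cutoff}.

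One correction in this part: the blockwise ``$\sim$ uniformly'' with $\exp(-u^2V)$ is not available. On $D_u$ the error $u^2\,o(V)$ can be as large as $o(\log^2u)$, and near the axes $V$ varies inside a block. The paper instead does the following:
\begin{enumerate}[(a)]
\item it uses the base-point upper bound with $(1-\varepsilon)u^2V(\bfv)$ for every block;
\item it uses the matching lower bound with $(1+\varepsilon)u^2V(\bfv)$ only for blocks with $v_i\ge R_uq_u$;
\item it lets $\varepsilon\downarrow0$ through the explicit $\gamma$-dependence in \Cref{lem:integrals}.
\end{enumerate}

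\textbf{Regime (i): the genuine gap.} You leave this regime open, and the route you sketch does not give the stated constant. With square blocks of side $Sq_u$ in the row $l=0$, your own local lemma (driftless) gives $\calH_\alpha(S)^2$ per block. That yields $\calH_\alpha(S)^2S^{-1}G_\beta u^{2/\alpha-2/\beta}\Psi(u)$ per side, which behaves like $\calH_\alpha^2S$, not $\calH_\alpha$, as $S\to\infty$. The claim that ``one block in $t_2$ captures everything'' needs a separate local lemma: one in which the drift $u^2t_1^at_2^a$ diverges on the $q_u$-scale and the cross-direction factor tends to $1$. You do not supply it.

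The paper avoids this with a thin strip. Blocks $[kMq_u,(k+1)Mq_u]\times[0,\lambda q_u]$ carry the factor $\calH_\alpha(M)\calH_\alpha(\lambda)$, and $\calH_\alpha(\lambda)\to1$ as $\lambda\downarrow0$.

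Everything with both coordinates $\ge\lambda q_u$ is then handled by one observation. This covers your rows $l\ge1$ and your ``intermediate-scale'' blocks alike. Since $V$ is coordinatewise increasing, $\exp(-\gamma u^2V(\bfv))$ is at most the average of $\exp(-\gamma u^2V)$ over the adjacent cell toward the origin, regardless of the scale on which the integrand varies. Hence the union bound over these blocks is $O_\lambda\bigl(u^{4/\alpha}I_\gamma(u)\Psi(u)\bigr)$ with $\gamma=1-\varepsilon$. Your log-asymptotic $I_\gamma(u)\asymp u^{-2/a}\log u$ holds for every $a<\beta/2$, and $u^{4/\alpha-2/a}\log u=o(u^{2/\alpha-2/\beta})$ precisely when $a<a_0$. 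So no separate small-$t_1$ bookkeeping is needed.

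For the lower bound, restrict to the edges $X(t,0)$ and $X(0,t)$ and run a one-dimensional double sum on each. The overlap of the two edge events costs only $O(\Psi(u))$, which is negligible since $\alpha<\beta$.
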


\begin{remark}[How to read the four regimes]
The regimes in \Cref{thm:main} are ordered by the strength of the product term.  In \eqref{eq:main-classical} the product term is negligible at the additive variance scale.  In \eqref{eq:main-critical} it is visible but does not change the order.  In \eqref{eq:main-log} it forces one coordinate to be small and creates the logarithmic integration over scales.  In \eqref{eq:main-side} the two-dimensional block contribution is already smaller than the contribution obtained by summing one-dimensional correlation blocks along the two sides of the square.
\end{remark}

For a trend we keep only the simplest readable version, namely the quadratic side variance \(\beta=2\).  Put
\begin{equation}\label{eq:Lc-def}
        L(c)=\int_0^\infty e^{-x^2-cx}\dd x,
        \qquad c\ge0,
\end{equation}
and
\begin{equation}\label{eq:Kc-def}
        K(c_1,c_2)
        =\int_0^\infty\int_0^\infty
        e^{-x^2-y^2-xy-c_1x-c_2y}\dd x\dd y .
\end{equation}

\begin{corollary}\label{cor:trend}
Assume \ref{ass:variance}--\ref{ass:nondegenerate}, let \(\beta=2\), \(0<\alpha<2\), and fix \(c_1,c_2\ge0\).  Define
\[
        p_{\boldsymbol c}(u)
        =\PP\left\{\sup_{\bft\in E}\bigl(X(\bft)-c_1t_1-c_2t_2\bigr)>u\right\},
        \qquad
        a_0=\frac{2\alpha}{\alpha+2}.
\]
Then, as \(u\to\infty\),
\begin{enumerate}[(i)]
\item if \(0<a<a_0\), then
\begin{equation}\label{eq:trend-side}
        p_{\boldsymbol c}(u)
        \sim
        \calH_\alpha\bigl(L(c_1)+L(c_2)\bigr)
        u^{2/\alpha-1}\Psi(u);
\end{equation}
\item if \(a_0\le a<1\), then
\begin{equation}\label{eq:trend-log}
        p_{\boldsymbol c}(u)
        \sim
        \calH_\alpha^2\,
        \frac{2(1-a)\Gamma(1/a)}{a^2}\,
        u^{4/\alpha-2/a}\log u\,\Psi(u);
\end{equation}
\item if \(a=1\), then
\begin{equation}\label{eq:trend-critical}
        p_{\boldsymbol c}(u)
        \sim
        \calH_\alpha^2 K(c_1,c_2)
        u^{4/\alpha-2}\Psi(u);
\end{equation}
\item if \(a>1\), then
\begin{equation}\label{eq:trend-classical}
        p_{\boldsymbol c}(u)
        \sim
        \calH_\alpha^2 L(c_1)L(c_2)
        u^{4/\alpha-2}\Psi(u).
\end{equation}
\end{enumerate}
\end{corollary}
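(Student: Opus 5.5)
The plan is to reduce \Cref{cor:trend} to the proof of \Cref{thm:main}, by absorbing the linear trend into a \(u\)-dependent standard deviation.

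\textbf{Reduction.} Put \(\boldsymbol c\cdot\bft=c_1t_1+c_2t_2\) and
\[
Y_u(\bft)=\frac{X(\bft)}{1+\boldsymbol c\cdot\bft/u}.
\]
Then
\[
\{X(\bft)-\boldsymbol c\cdot\bft>u\}=\{Y_u(\bft)>u\},
\qquad\text{so}\qquad
p_{\boldsymbol c}(u)=\PP\{\sup_E Y_u>u\}.
\]
The field \(Y_u\) has the same correlation function \(r\) as \(X\). Its standard deviation is \(\sigma_u(\bft)=\sigma(\bft)/(1+\boldsymbol c\cdot\bft/u)\), which satisfies
\[
1-\sigma_u(\bft)=(1+o(1))\bigl(V(\bft)+\boldsymbol c\cdot\bft/u\bigr)
\]
uniformly for \(\bft\) near \(\bfzero\). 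Moreover \(\sigma_u\le\sigma\), so \(Y_u\) inherits \ref{ass:variance}--\ref{ass:nondegenerate} uniformly in \(u\). In particular, the Borell--TIS localization to a shrinking neighbourhood of \(\bfzero\) used in the proof of \Cref{thm:main} applies verbatim.

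\textbf{Rerunning the double-sum argument.} I would repeat the proof of \Cref{thm:main} with the variance penalty \(u^2V\) replaced by \(u^2V+u\,\boldsymbol c\cdot\bft\). The local Pickands estimates of \Cref{sec:auxiliary-lemmas} are unaffected, since they only involve the correlation on \(q_u\)-blocks. On a block of side \(q_u=u^{-2/\alpha}\ll u^{-1}\), the added term \(u\,\boldsymbol c\cdot\bft\) varies by \(o(1)\), so the block probability is the old one multiplied by \(e^{-u\boldsymbol c\cdot\bft}\) at the base point.

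The double-sum bounds also transfer directly. The extra penalty is nonnegative, so every upper bound for the double sum in the \(\boldsymbol c=\bfzero\) case dominates the trended one, and negligibility is preserved. Only the single-sum Riemann sums change, and I would recompute them regime by regime with \(\beta=2\), where the variance scale is \(u^{-1}\).

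\textbf{Regime-by-regime limits.}
\begin{itemize}
\item[(i)] For the side term along \([0,T]\times\{0\}\), substituting \(x=ut_1\) turns the exponent into \(x^2+c_1x\). The factor \(G_2\) is therefore replaced by \(L(c_1)\), and symmetrically by \(L(c_2)\) on the other side, which gives \eqref{eq:trend-side}. The product term remains negligible there exactly as in \Cref{thm:main}(i).
\item[(iii), (iv)] On the full two-dimensional scale \((x,y)=(ut_1,ut_2)\), the integrand acquires the factor \(e^{-c_1x-c_2y}\). This gives \(K(c_1,c_2)\) at \(a=1\), and \(L(c_1)L(c_2)\) for \(a>1\), the latter because the product term vanishes in the limit.
\item[(ii)] The logarithm arises from integrating over scales of the small coordinate between two powers of \(u\), along the curve \(u^2t_1^at_2^a\asymp1\). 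The trend factor \(e^{-u\boldsymbol c\cdot\bft}\) differs from \(1\) only where a coordinate is \(\gtrsim u^{-1}\), which is a bounded range of \(\log\)-scales out of a range of length \(\asymp\log u\). Dominated convergence on the rescaled logarithmic variable, with the bound \(e^{-u\boldsymbol c\cdot\bft}\le1\), then shows that the coefficient of \(\log u\) is unchanged. This yields \eqref{eq:trend-log}.
\end{itemize}

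\textbf{Main obstacle.} The hardest step is regime (ii). There I must check in the proof of \Cref{thm:main}(ii) that the region producing the \(\log u\) factor lies, up to \(o(\log u)\), where both \(ut_1\) and \(ut_2\) are small or where the trend-affected coordinate contributes only a bounded correction. I would first verify this for the explicit integral asymptotics lemma in \Cref{sec:auxiliary-lemmas}, by inserting the factor \(e^{-c_1x-c_2y}\) and splitting the logarithmic integral at \(x=M\) and \(y=M\) for a large constant \(M\).
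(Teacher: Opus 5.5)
Your proposal is correct and follows essentially the same route as the paper, which also reruns the proof of \Cref{thm:main}, uses $q_u=o(u^{-1})$ to freeze the trend on each block so that Mills' ratio contributes the extra factor $\exp\{-u(c_1v_1+c_2v_2)+o(1)\}$, and replaces the integrals of \Cref{lem:integrals} by those of \Cref{lem:trend-integrals}; in regime (ii) the paper likewise observes that the trend only changes the bounded part of $A_{\boldsymbol c}(Z)$, which is your log-scale argument. Your normalization $Y_u=X/(1+\boldsymbol c\cdot\bft/u)$ is only a different bookkeeping of the paper's level shift $u+c_1t_1+c_2t_2$, with the small convenience that $Y_u/\sigma_u=X/\sigma$, so the standardized field entering \Cref{lem:local-pickands} is literally unchanged.
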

\begin{remark}[Why the trend corollary is stated for \(\beta=2\)]
	The restriction \(\beta=2\) in \Cref{cor:trend} is mainly a matter of balance.  A fixed linear trend contributes to the exponential term on the scale \(t\asymp u^{-1}\), whereas the variance loss \(t^\beta\) is felt on the scale \(t\asymp u^{-2/\beta}\).  Hence, if \(\beta<2\), then \(u^{-2/\beta}=o(u^{-1})\), and the trend is negligible on the variance scale; the corresponding fixed-trend analogue of \Cref{thm:main} has the same leading asymptotics.  If \(\beta=2\), the two effects live on the same scale, which is exactly the situation covered in \Cref{cor:trend}: the powers of \(u\) are unchanged, but the constants may change.  If \(\beta>2\), then \(u^{-1}=o(u^{-2/\beta})\), so a positive linear trend can cut off the variance scale before the variance loss becomes effective, and the order of the asymptotics may change.  We do not treat the cases \(\beta\ne2\) here, since for \(\beta<2\) the leading asymptotics are unchanged, while the extension \(\beta>2\) is not conceptually difficult but would require separate case distinctions and would obscure the product-term transition which is the focus of the paper.
\end{remark}

\begin{example}[The quadratic model with Brownian local structure]\label{ex:original}
Let \(\alpha=1\), \(\beta=2\), and \(c_1=c_2=0\).  Since \(\calH_1=1\), \(G_2=\Gamma(3/2)=\sqrt\pi/2\), and \(a_0=2/3\), \Cref{thm:main} gives
\[
        p(u)\sim
        \begin{cases}
        \sqrt\pi\,u\Psi(u),
        &0<a<2/3,\\[0.4em]
        \dfrac{3\sqrt\pi}{4}\,u\log u\,\Psi(u),
        &a=2/3,\\[0.8em]
        \dfrac{2(1-a)\Gamma(1/a)}{a^2}\,
        u^{4-2/a}\log u\,\Psi(u),
        &2/3<a<1,\\[0.8em]
        \dfrac{\pi}{3\sqrt3}\,u^2\Psi(u),
        &a=1,\\[0.8em]
        \dfrac{\pi}{4}\,u^2\Psi(u),
        &a>1.
        \end{cases}
\]
The first line is the side-dominated regime.  The second and third lines correspond to the logarithmic regime generated by asymmetric two-dimensional correlation blocks.  At \(a=1\),
\[
        \int_0^\infty\int_0^\infty e^{-x^2-y^2-xy}\dd x\dd y
        =\frac{\pi}{3\sqrt3}.
\]
\end{example}

\begin{example}[The critical product case]\label{ex:critical}
For any \(0<\alpha<\beta\), the variance loss
\[
        t_1^\beta+t_2^\beta+t_1^{\beta/2}t_2^{\beta/2}
\]
falls under \Cref{thm:main}(iii).  The order is the same as in the additive model,
\[
        u^{4/\alpha-4/\beta}\Psi(u),
\]
but the additive constant \(G_\beta^2\) is replaced by the critical integral \(K_\beta\).  For \(\beta=2\), this gives the explicit constant \(\pi/(3\sqrt3)\).
\end{example}

\begin{example}[A linear trend on the sides]\label{ex:trend}
Let \(\alpha=1\), \(\beta=2\), and \(0<a<2/3\).  Then the leading contribution is produced by the two sides, and Corollary~\ref{cor:trend} gives
\[
        p_{\boldsymbol c}(u)
        \sim
        \bigl(L(c_1)+L(c_2)\bigr)u\Psi(u),
\]
where \(L\) is defined in \eqref{eq:Lc-def}.  Thus a positive drift on the first side changes only the constant associated with \([0,T]\times\{0\}\), and similarly for the second side.  In contrast, when \(2/3\le a<1\), the leading logarithmic constant is unchanged by any fixed non-negative linear trend.
\end{example}

\section{Auxiliary lemmas}\label{sec:auxiliary-lemmas}

We collect the estimates used in the proof.  Recall that
\begin{equation}\label{eq:qu}
        q_u=u^{-2/\alpha}.
\end{equation}
For \(S_1,S_2>0\) and \(\bfv\in[0,T]^2\), write
\[
        B_u(\bfv;S_1,S_2)
        =[v_1,v_1+S_1q_u]\times[v_2,v_2+S_2q_u].
\]
When \(S_1=S_2=S\), write simply \(B_u(\bfv;S)\).  The following local estimate is the formal version of the block description in the introduction.

\begin{lemma}\label{lem:local-pickands}
Assume \ref{ass:variance}--\ref{ass:correlation}.  Fix \(S_1,S_2>0\).  For every \(\varepsilon>0\), there exists \(\delta>0\) such that, uniformly for \(\bfv\in[0,\delta]^2\) and all sufficiently large \(u\),
\begin{align}\label{eq:block-upper}
&\PP\left\{\sup_{\bft\in B_u(\bfv;S_1,S_2)}X(\bft)>u\right\}  
\notag\\
&\quad\le
(1+\varepsilon)\calH_\alpha(S_1)\calH_\alpha(S_2)\Psi(u)
\exp\left(-(1-\varepsilon)u^2V(\bfv)\right).
\end{align}
Moreover, if \(R_u\to\infty\) and the estimate is restricted to blocks satisfying \(v_i\ge R_uq_u\), \(i=1,2\), then
\begin{align}\label{eq:block-lower}
&\PP\left\{\sup_{\bft\in B_u(\bfv;S_1,S_2)}X(\bft)>u\right\}  
\notag\\
&\quad\ge
(1-\varepsilon)\calH_\alpha(S_1)\calH_\alpha(S_2)\Psi(u)
\exp\left(-(1+\varepsilon)u^2V(\bfv)\right)
\end{align}
for all sufficiently large \(u\), uniformly over such blocks contained in \([0,\delta]^2\).
\end{lemma}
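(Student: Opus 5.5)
The plan is to compare \(X\) on a single block with a field of constant variance via the normalized field \(\bar X=X/\sigma\), and then to apply the classical Pickands lemma to the rescaled, recentered normalized field. Fix a block \(B=B_u(\bfv;S_1,S_2)\) with \(\bfv\in[0,\delta]^2\), and write \(\sigma_-=\min_B\sigma\) and \(\sigma_+=\max_B\sigma\). Then
\[
\PP\Bigl\{\sup_{B}\bar X>u/\sigma_-\Bigr\}\;\ge\;\PP\Bigl\{\sup_B X>u\Bigr\}\;\le\;\PP\Bigl\{\sup_B\bar X>u/\sigma_+\Bigr\},
\]
where the first inequality is reversed from what is needed for the lower bound. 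So I use \(\sup_B X\ge \sigma_-\sup_B\bar X\) for the lower bound, and \(\sup_B X\le\sigma_+\sup_B\bar X\) for the upper bound, where \(\sup_B\bar X>0\) on the relevant event. It therefore suffices to control the effective thresholds \(u_\pm=u/\sigma_\pm\) and to prove a uniform Pickands estimate for \(\bar X\).

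\textbf{Step 1: uniform Pickands estimate for \(\bar X\).} Set \(\chi_{u,\bfv}(\bfs)=u_*(\bar X(\bfv+q_u\bfs)-u_*)+w\), conditioned on \(\bar X(\bfv)=u_*-w/u_*\), for thresholds \(u_*=u(1+o(1))\) that are uniform in \(\bfv\). By \eqref{eq:correlation-assumption}, uniformly in \(\bfv\in[0,\delta]^2\), the conditional mean is \(-(1+o(1))(|s_1|^\alpha+|s_2|^\alpha)\) and the conditional variance of increments is \(2(1+o(1))(|s_1-s_1'|^\alpha+|s_2-s_2'|^\alpha)\), both up to an error of relative size \(\eta(\delta)\to0\). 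I then run the standard argument: weak convergence of \(\chi\) to \(\sqrt2 B_\alpha^{(1)}(s_1)-s_1^\alpha+\sqrt2B_\alpha^{(2)}(s_2)-s_2^\alpha\) on \([0,S_1]\times[0,S_2]\), tightness via a Piterbarg-type entropy bound, and dominated convergence in \(w\). This gives
\[
\PP\Bigl\{\sup_B\bar X>u_*\Bigr\}=(1+o(1))\calH_\alpha(S_1)\calH_\alpha(S_2)\Psi(u_*).
\]
The limit field is additive with independent coordinates, which factorises the constant. The \(\delta\)-dependent slack in the correlation asymptotics is absorbed into the factor \(1\pm\varepsilon\), for instance by sandwiching the correlation between \((1\pm\eta)(|\cdot|^\alpha+|\cdot|^\alpha)\) via Slepian's inequality, together with continuity of \(\calH_\alpha(S)\) under scaling of the drift and covariance.

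\textbf{Step 2: variance factor.} I write \(\Psi(u/\sigma)\le\Psi(u)\exp(-u^2(1/\sigma^2-1)/2)\), and note \(\tfrac12(1/\sigma^2-1)=(1+o(1))(1-\sigma)=(1+o(1))V\) near \(\bfzero\). For the upper bound I need \(\sigma_+\le 1-(1-\varepsilon)V(\bfv)\). On \(B\) every \(\bft\ge\bfv\) coordinatewise, and \(V\) is monotone in each coordinate, so \(V(\bft)\ge V(\bfv)\); hence \(1-\sigma(\bft)\ge(1-\varepsilon')V(\bfv)\) for \(\delta\) small. This gives \eqref{eq:block-upper} with no restriction on \(\bfv\), and this monotonicity is why the block is attached at its lower-left corner.

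For the lower bound I need \(1-\sigma_-\le(1+\varepsilon)V(\bfv)\), that is, \(V(\bfv+q_u\bfs)\le(1+\varepsilon)V(\bfv)\) for \(\bfs\in[0,S_1]\times[0,S_2]\). Under \(v_i\ge R_uq_u\) each coordinate changes by a relative factor at most \(1+S_i/R_u\to1\). Each monomial \(t_1^\beta,t_2^\beta,t_1^at_2^a\) therefore changes by a factor \(1+o(1)\) uniformly, which is exactly where the hypothesis \(R_u\to\infty\) enters. I then also use \(u^2\varepsilon V\)-type error terms inside the exponential, which are permitted by the form of the statement.

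\textbf{Main obstacle.} I expect the main difficulty to be the uniformity in Step 1 over \(\bfv\in[0,\delta]^2\) with a threshold depending on \(\bfv\), since the \(o(1)\) in \eqref{eq:correlation-assumption} is only uniform as \(\delta\downarrow0\). I would handle this by fixing \(\delta\) first, depending on \(\varepsilon\), through the Slepian sandwich with the two comparison stationary-increment fields \(Y_\pm\) whose correlations are \(\exp(-(1\pm\eta)(|t_1-s_1|^\alpha+|t_2-s_2|^\alpha))\). For \(Y_\pm\) the Pickands asymptotics are exact and independent of \(\bfv\) by stationarity, and they produce the constants \(\calH_\alpha((1\pm\eta)^{1/\alpha}S_i)\), which lie within \(1\pm\varepsilon\) of \(\calH_\alpha(S_i)\) for small \(\eta\).
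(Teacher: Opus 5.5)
Your proposal is correct and follows essentially the same route as the paper. You pass to $\overline X=X/\sigma$ and use a uniform finite-block Pickands estimate whose additive tangent field factorises the constant into $\calH_\alpha(S_1)\calH_\alpha(S_2)$; coordinatewise monotonicity of $V$ gives the upper bound, the relative $O(R_u^{-1})$ variation of the monomials under $v_i\ge R_uq_u$ gives the lower bound, and Mills' ratio finishes. Your Slepian sandwich with the stationary fields $Y_\pm$ is a slightly more explicit way than the paper's appeal to Piterbarg's Lemma 6.1 to get uniformity in $\bfv$ and in the threshold. Fix the first display: the left inequality should read $\PP\{\sup_B\overline X>u/\sigma_-\}\le\PP\{\sup_B X>u\}$, which is what your subsequent inclusions correctly give.
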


\begin{lemma}\label{lem:rectangle-bounds}
Assume \ref{ass:variance}--\ref{ass:nondegenerate}, and let \(M>0\).  For every \(\varepsilon>0\), there exists \(\delta>0\) such that, as \(u\to\infty\),
\begin{align}\label{eq:upper-rectangles}
&\PP\left\{\sup_{\bft\in[0,\delta]^2}X(\bft)>u\right\}
\notag\\
&\quad\le
(1+\varepsilon)\Psi(u)
\biggl[
\calH_\alpha(M)^2
+2\frac{\calH_\alpha(M)^2}{M}u^{2/\alpha}
       \int_0^\delta e^{-(1-\varepsilon)u^2x^\beta}\dd x
\notag\\
&\hspace{7.5em}
+\frac{\calH_\alpha(M)^2}{M^2}u^{4/\alpha}
       \int_0^\delta\int_0^\delta e^{-(1-\varepsilon)u^2V(x,y)}\dd x\dd y
\biggr].
\end{align}
Furthermore, define
\[
        I_\delta(u)=\int_0^\delta\int_0^\delta e^{-u^2V(x,y)}\dd x\dd y .
\]
If \(R_u\to\infty\) and \(R_uq_u=o(u^{-2/\beta})\), then for some function \(\eta(M)\downarrow0\) as \(M\to\infty\),
\begin{align}\label{eq:lower-rectangles}
&\PP\left\{\sup_{\bft\in[0,\delta]^2}X(\bft)>u\right\}
\notag\\
&\quad\ge
(1-\varepsilon)\frac{\calH_\alpha(M)^2}{M^2}u^{4/\alpha}
\Psi(u)
\int_{R_uq_u}^\delta\int_{R_uq_u}^\delta
      e^{-(1+\varepsilon)u^2V(x,y)}\dd x\dd y
\notag\\
&\hspace{2.5em}
-\eta(M)\Psi(u)u^{4/\alpha}
\int_0^\delta\int_0^\delta e^{-(1-\varepsilon)u^2V(x,y)}\dd x\dd y
+o\left(\Psi(u)u^{4/\alpha}I_\delta(u)\right).
\end{align}
\end{lemma}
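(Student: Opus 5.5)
The plan is to discretize \([0,\delta]^2\) into Pickands blocks \(B_u(\bfv;M)\), with base points \(\bfv=(k_1Mq_u,k_2Mq_u)\), \(k_i\in\{0,1,\dots\}\), and \(\delta\) chosen small as required by \Cref{lem:local-pickands}. For the upper bound we use the union bound followed by \eqref{eq:block-upper}. For the lower bound we use the Bonferroni inequality over blocks with \(v_i\ge R_uq_u\), followed by \eqref{eq:block-lower} and a double-sum estimate. The common tool is that \(V\) is nondecreasing in each coordinate. Hence a sum of \(e^{-cu^2V(\bfv)}\) over grid points can be compared with \((Mq_u)^{-2}\) times the integral of \(e^{-cu^2V}\) over the neighbouring cells.

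\emph{Upper bound.} The blocks split into three groups.
\begin{itemize}
\item The corner block \(\bfv=\bfzero\) contributes \((1+\varepsilon)\calH_\alpha(M)^2\Psi(u)\), since \(V(\bfzero)=0\).
\item The edge blocks \(\bfv=(kMq_u,0)\) with \(k\ge1\), and symmetrically \(\bfv=(0,kMq_u)\), have \(V(\bfv)=(kMq_u)^\beta\). This is decreasing in \(k\) after exponentiation, so \(\sum_{k\ge1}e^{-(1-\varepsilon)u^2(kMq_u)^\beta}\le (Mq_u)^{-1}\int_0^\delta e^{-(1-\varepsilon)u^2x^\beta}\dd x\). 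Since \(q_u^{-1}=u^{2/\alpha}\), the two edges together give the middle term of \eqref{eq:upper-rectangles}.
\item For the interior blocks, \(k_1,k_2\ge1\), we bound \(e^{-(1-\varepsilon)u^2V(\bfv)}\) by the average of the same function over the cell \([v_1-Mq_u,v_1]\times[v_2-Mq_u,v_2]\). Summing gives the last term of \eqref{eq:upper-rectangles}.
\end{itemize}
Blocks protruding past \(\delta\) are harmless. We apply \Cref{lem:local-pickands} on \([0,2\delta]^2\) and shrink \(\delta\). The factors \((1+\varepsilon)\) and \((1-\varepsilon)\) coming from the lemma are absorbed by renaming \(\varepsilon\).

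\emph{Lower bound.} Let \(\mathcal V\) be the set of grid base points with \(v_i\ge R_uq_u\) whose blocks lie in \([0,\delta]^2\), and let \(A_{\bfv}\) be the event that the supremum over \(B_u(\bfv;M)\) exceeds \(u\). Then
\[
\PP\Bigl\{\sup_{[0,\delta]^2}X>u\Bigr\}\ge\sum_{\bfv\in\mathcal V}\PP(A_{\bfv})-\sum_{\bfv\ne\bfw}\PP(A_{\bfv}\cap A_{\bfw}).
\]
For the single sum we use \eqref{eq:block-lower}. Monotonicity of \(V\) now runs in the opposite direction: \(e^{-(1+\varepsilon)u^2V(\bfv)}\) dominates the average over the cell \([v_1,v_1+Mq_u]\times[v_2,v_2+Mq_u]\). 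This yields the main term of \eqref{eq:lower-rectangles}, up to the missing strips of width \(O(Mq_u)\) along the edges. Those strips contribute \(O(Mq_u/u^{-2/\beta})\) relative to \(I_\delta(u)\), or are controlled by the same monotone comparison, and therefore go into the \(o(\Psi(u)u^{4/\alpha}I_\delta(u))\) term.

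\emph{Double sum, the main obstacle.} We split pairs by the distance \(|\bfv-\bfw|\).
\begin{itemize}
\item Pairs at distance \(\ge\varepsilon'\) are handled by \ref{ass:nondegenerate}. Since \(r\le1-\rho\) there, a Borell--TIS bound applied to the field \(X(\bft)+X(\bfs)\) gives \(\PP(A_{\bfv}\cap A_{\bfw})\le e^{-u^2(1+\rho')/(2-\rho)}\), up to polynomial factors. The number of pairs is \(O(q_u^{-4})\), so this total is \(o(\Psi(u)u^{4/\alpha}I_\delta(u))\), because \(I_\delta(u)\) decays only polynomially.
\item Pairs at distance \(<\varepsilon'\) lie in the region where \ref{ass:correlation} holds. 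There we bound \(\PP(A_{\bfv}\cap A_{\bfw})\) by \(e^{-(1-\varepsilon)u^2V(\bfv)}\) times the standard bound for the homogeneous fBm-type field obtained by normalizing the variance to \(\sigma(\bfv)\). The normalization uses monotonicity and the local continuity of \(\sigma\) at the \(q_u\) scale. For adjacent blocks this standard bound is \(\Psi(u)\bigl[2\calH_\alpha(M)^2-\calH_\alpha(2M)\calH_\alpha(M)\bigr]\lesssim \Psi(u)\bigl(M\calH_\alpha(\sqrt M)+M^{3/2}\bigr)\), via a Pickands-type subadditivity argument. For non-adjacent blocks at rescaled distance \(kM\) it is \(\Psi(u)\,CM^4e^{-c(kM)^\alpha}\), via Piterbarg's lemma or Slepian comparison with the sum field.
\end{itemize}
Summing over \(\bfw\) for fixed \(\bfv\) gives \(\Psi(u)e^{-(1-\varepsilon)u^2V(\bfv)}M^2\eta(M)\) with \(\eta(M)\to0\). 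Summing over \(\bfv\) with the Riemann comparison of the upper bound then produces \(\eta(M)\Psi(u)u^{4/\alpha}\int\!\!\int e^{-(1-\varepsilon)u^2V}\). The delicate point is to keep all of these estimates uniform in \(\bfv\) while the variance is non-constant inside a block. For this we use that \(u^2\bigl(V(\bft)-V(\bfv)\bigr)\) is uniformly small on a block only when \(v_i\ge R_uq_u\); this is exactly where the hypothesis \(R_uq_u=o(u^{-2/\beta})\) is used.
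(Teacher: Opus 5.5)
Your proposal is correct, and your upper bound is exactly the paper's: a union bound over the $Mq_u$-grid, the corner/edge/interior split, and the monotone Riemann comparison. The lower bound follows a genuinely different route.

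\emph{How the paper does it.} The paper never lets two blocks touch. It places $B_u(\bfv_{\boldsymbol k};M,M)$ at base points $R_uq_u+k_iLq_u$ with $L=M+\lfloor M^{1/2}\rfloor$, so any two distinct blocks are separated by at least $\lfloor M^{1/2}\rfloor q_u$ in some coordinate. A single estimate then handles every pair uniformly. It combines the sum field $Y_{\boldsymbol k,\boldsymbol l}$, Piterbarg's inequality, and $1-\rho_{\boldsymbol k,\boldsymbol l}\ge c\,d_{\boldsymbol k,\boldsymbol l}^\alpha u^{-2}$, obtained from \ref{ass:correlation}--\ref{ass:nondegenerate}. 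The factor $e^{-cM^{\alpha/2}}$ beats $(1+M)^4$. The price is an area loss of relative size $O(M^{-1/2})$, which is absorbed into $\eta(M)$ by comparing each gap strip with the preceding block via monotonicity.

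\emph{What your route buys and costs.} You keep the full tiling and lose no area. In exchange, touching pairs need the finite-block Pickands asymptotics of $X/\sigma$ at a common level, i.e.\ $\calH_\alpha(M)\bigl(2\calH_\alpha(M)-\calH_\alpha(2M)\bigr)=o(M^2)$. This already follows from $\calH_\alpha(S)/S\to\calH_\alpha$, so no $\sqrt M$ rate is needed. You also need a separate Borell--TIS step for far pairs.

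\emph{Three small corrections.}

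First, your displayed identity covers only edge-sharing neighbours. For corner-sharing neighbours, bound $A_{\bfv}\cap A_{\bfw}$ by the joint exceedance over the two edge-adjacent $M\times 2M$ rectangles containing the blocks. This gives $\calH_\alpha(2M)\bigl(2\calH_\alpha(M)-\calH_\alpha(2M)\bigr)=o(M^2)$.

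Second, the missing strip next to $x=R_uq_u$ is not $O(Mq_uu^{2/\beta})$ relative to $I_\delta(u)$ when $a<\beta/2$. In that range most of the mass of $I_\delta(u)$ lies at $x$ or $y$ far below $u^{-2/\beta}$. Instead, take the decreasing function $f(x)=\int_{R_uq_u}^\delta e^{-(1+\varepsilon)u^2V(x,y)}\dd y$ and use $\int_r^{r+h}f\le (h/r)\int_0^r f$ with $h=Mq_u$, $r=R_uq_u$. This gives $O(M/R_u)\,I_\delta(u)$. Alternatively, start the grid at $R_uq_u$, as the paper does.

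Third, your closing remark misplaces the hypotheses. The relative smallness of $V(\bft)-V(\bfv)$ on a block comes from $R_u\to\infty$, and it enters only the single sum, through \Cref{lem:local-pickands}. The double sum needs only the one-sided bound $\sigma(\bft)\le 1-(1-\eta)V(\bfv)$. The hypothesis $R_uq_u=o(u^{-2/\beta})$ is used only to guarantee $R_uq_u\to0$.
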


For the side-dominated regime we need a sharper upper bound near the sides.  Let \(\lambda>0\) and define the side strips
\[
        S_{1,u}(\lambda)=[0,\delta]\times[0,\lambda q_u],
        \qquad
        S_{2,u}(\lambda)=[0,\lambda q_u]\times[0,\delta].
\]

\begin{lemma}\label{lem:side-bounds}
Assume \ref{ass:variance}--\ref{ass:nondegenerate}.  Fix \(M>0\), \(\lambda>0\), and \(\varepsilon>0\).  There exists \(\delta>0\) such that, as \(u\to\infty\),
\begin{align}\label{eq:upper-sides}
&\PP\left\{\sup_{\bft\in S_{1,u}(\lambda)\cup S_{2,u}(\lambda)}X(\bft)>u\right\}
\notag\\
&\quad\le
(1+\varepsilon)\Psi(u)
\biggl[
2\calH_\alpha(\lambda)^2
+\frac{\calH_\alpha(M)\calH_\alpha(\lambda)}{M}u^{2/\alpha}
      \sum_{i=1}^2\int_0^\delta e^{-(1-\varepsilon)u^2x^\beta}\dd x
\biggr].
\end{align}
In addition, if \(R_u\to\infty\) and \(R_uq_u=o(u^{-2/\beta})\), then for some function \(\eta_1(M)\downarrow0\) as \(M\to\infty\),
\begin{align}\label{eq:lower-sides}
&\PP\left\{\sup_{t\in[0,\delta]}X(t,0)>u\ \text{or}\ \sup_{t\in[0,\delta]}X(0,t)>u\right\}
\notag\\
&\quad\ge
(1-\varepsilon)\frac{\calH_\alpha(M)}{M}u^{2/\alpha}
\Psi(u)
\sum_{i=1}^2\int_{R_uq_u}^\delta e^{-(1+\varepsilon)u^2x^\beta}\dd x
\notag\\
&\hspace{2.5em}
-\eta_1(M)\Psi(u)u^{2/\alpha}\int_0^\delta e^{-(1-\varepsilon)u^2x^\beta}\dd x+o\left(\Psi(u)u^{2/\alpha-2/\beta}\right).
\end{align}
\end{lemma}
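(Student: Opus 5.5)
We prove \Cref{lem:side-bounds} by the same double-sum scheme as \Cref{lem:rectangle-bounds}, with the strips covered by blocks of size \(Mq_u\times\lambda q_u\). The main tool is the block estimate of \Cref{lem:local-pickands}.

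\emph{Upper bound.} Partition \(S_{1,u}(\lambda)\) into the corner block \([0,\lambda q_u]^2\) and the blocks \(B_u((kMq_u,0);M,\lambda)\), \(k\ge0\), with \(kMq_u\le\delta\); treat \(S_{2,u}(\lambda)\) symmetrically. The two corner blocks together give at most \(2\calH_\alpha(\lambda)^2(1+\varepsilon)\Psi(u)\) by \eqref{eq:block-upper} with \(\bfv=\bfzero\). For the remaining blocks, the upper estimate \eqref{eq:block-upper} holds uniformly in \(\bfv\in[0,\delta]^2\) and does not require \(v_i\ge R_uq_u\). Applied with \(S_1=M,S_2=\lambda\), it bounds each block by \((1+\varepsilon)\calH_\alpha(M)\calH_\alpha(\lambda)\Psi(u)e^{-(1-\varepsilon)u^2V(kMq_u,0)}\), and here \(V(x,0)=x^\beta\). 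A union bound followed by a Riemann-sum comparison then gives the stated term. Since \(x\mapsto e^{-cu^2x^\beta}\) is monotone, we have \(\sum_{k\ge1}e^{-(1-\varepsilon)u^2(kMq_u)^\beta}\le (Mq_u)^{-1}\int_0^\delta e^{-(1-\varepsilon)u^2x^\beta}\dd x\), while the \(k=0\) block is absorbed as in the corner. Finally, \((Mq_u)^{-1}=u^{2/\alpha}/M\), which produces \eqref{eq:upper-sides}, after adjusting \(\varepsilon\) and shrinking \(\delta\) so that the \(o(1)\) terms in \ref{ass:variance}--\ref{ass:correlation} are within \(\varepsilon\).

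\emph{Lower bound.} Work on the segment \([0,\delta]\times\{0\}\), using the one-dimensional analogue of \Cref{lem:local-pickands}: \(t\mapsto X(t,0)\) satisfies \(1-r\sim|t-s|^\alpha\) and \(1-\sigma\sim t^\beta\). We therefore expect the proof of the lemma to give, for one-dimensional intervals \([v,v+Mq_u]\) with \(v\ge R_uq_u\), a lower bound \((1-\varepsilon)\calH_\alpha(M)\Psi(u)e^{-(1+\varepsilon)u^2v^\beta}\). Take intervals \(I_k=[kMq_u,(k+1)Mq_u]\) on each side with \(kMq_u\ge R_uq_u\). Bonferroni then gives \(\PP(\cup A_k)\ge\sum\PP(A_k)-\sum_{k\ne l}\PP(A_k\cap A_l)\), including the events taken from both sides. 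The single sum turns into the integral term by the reverse Riemann comparison.

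The double sum is the main obstacle. For pairs on the same side we split into neighbouring and distant pairs. Neighbouring pairs (\(|k-l|=1\)) are handled by the standard argument: \(\PP(A_k\cap A_{k+1})=\PP(A_k)+\PP(A_{k+1})-\PP(A_k\cup A_{k+1})\), which is \(\le\Psi(u)e^{-(1-\varepsilon)u^2v^\beta}(2\calH_\alpha(M)-\calH_\alpha(2M)+\varepsilon M)\), and by subadditivity and \eqref{eq:pickands-constant} this is \(o(M)\) relative to \(\calH_\alpha(M)\). Distant pairs with \(|k-l|\ge2\) are treated by the classical Piterbarg bound \(\PP(A_k\cap A_l)\le C M^2\Psi(u)e^{-(1-\varepsilon)u^2v^\beta}e^{-c(|k-l|-1)^{\alpha}M^\alpha}\), derived from \ref{ass:correlation} via Slepian/Borell on the doubled field. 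Summing over \(l\) gives a contribution of order \(\eta_1(M)\,M\) per block. Pairs far apart, with \(|\bft-\bfs|\ge\varepsilon'\), use \ref{ass:nondegenerate} and Borell's inequality, which yields \(o(\Psi(u)u^{2/\alpha-2/\beta})\) since they are exponentially smaller. Pairs on different sides at distance \(\ge\varepsilon'\) are handled the same way. When both points lie near the corner they are within a \(\rho\)-neighbourhood, and there the correlation \(1-r\ge c(x^\alpha+y^\alpha)\) gives the same summable Piterbarg decay in \(k+l\).

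Dividing the double-sum bound by \(M\) and comparing with the Riemann sum yields \(\eta_1(M)u^{2/\alpha}\Psi(u)\int_0^\delta e^{-(1-\varepsilon)u^2x^\beta}\dd x\). The segment \([0,R_uq_u]\) is simply discarded, and the condition \(R_uq_u=o(u^{-2/\beta})\) ensures the integrals are unaffected at leading order, which gives \eqref{eq:lower-sides}.
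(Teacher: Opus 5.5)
Your upper bound follows the paper's argument: blocks of size $Mq_u\times\lambda q_u$ along the long side, the upper estimate of \Cref{lem:local-pickands} with $V(x,0)=x^\beta$, and a monotone Riemann comparison. There is one slip. The $k=0$ block carries the factor $\calH_\alpha(M)\calH_\alpha(\lambda)$, so it is not absorbed ``as in the corner''. It is an $O(\Psi(u))$ term that goes into the integral term instead, because $u^{2/\alpha}\int_0^\delta e^{-(1-\varepsilon)u^2x^\beta}\dd x\asymp u^{2/\alpha-2/\beta}\to\infty$ when $\alpha<\beta$ (the paper is equally brief here).

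For the lower bound you organise the double sum differently from the paper. The paper runs the one-dimensional version of the proof of \Cref{lem:rectangle-bounds}. It uses intervals of length $Mq_u$ separated by gaps of length $\lfloor M^{1/2}\rfloor q_u$, so every pair of blocks, including the cross-side pairs, is covered by the single Piterbarg estimate \eqref{doublebound}. The price is a deterministic comparison in which the gaps remove a fraction $O(M^{-1/2})$ of the integral. You instead keep adjacent intervals and control neighbours through $\PP(A_k\cap A_{k+1})=\PP(A_k)+\PP(A_{k+1})-\PP(A_k\cup A_{k+1})$ together with $2\calH_\alpha(M)-\calH_\alpha(2M)=o(M)$. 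You use the Piterbarg bound only for pairs at distance at least $Mq_u$.

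Your route avoids the gap bookkeeping, but it needs \eqref{eq:block-lower} for blocks of length $2M$, and the exponents need care. The upper and lower local estimates carry $e^{-(1-\varepsilon'')u^2v^\beta}$ and $e^{-(1+\varepsilon'')u^2v^\beta}$ respectively. Your per-pair neighbour bound with slack $\varepsilon M$ therefore holds uniformly in $v$ only if \Cref{lem:local-pickands} is applied with $\varepsilon''\lesssim\varepsilon^2$, because $\sup_{x\ge0}xe^{-\varepsilon x}\asymp\varepsilon^{-1}$.

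After division by $M$, this slack is $O(\varepsilon)$ rather than $\eta_1(M)$. It must be absorbed into the prefactor $(1-\varepsilon)$ of the main term by relabelling $\varepsilon$. That is legitimate, since $\calH_\alpha(M)/M$ is bounded away from zero and both integrals are of order $u^{-2/\beta}$. With these adjustments your route also yields \eqref{eq:lower-sides}.
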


The next lemma contains the integral asymptotics responsible for the transition.

\begin{lemma}\label{lem:integrals}
Let \(\delta>0\), \(\gamma>0\), and
\[
        I_\gamma(u)=\int_0^\delta\int_0^\delta
        e^{-\gamma u^2(x^\beta+y^\beta+x^ay^a)}\dd x\dd y .
\]
Then, as \(u\to\infty\),
\begin{enumerate}[(i)]
\item if \(0<a<\beta/2\), then
\begin{equation}\label{eq:integral-log}
        I_\gamma(u)
        \sim
        \frac{2(\beta-2a)\Gamma(1/a)}{a^2\beta\,\gamma^{1/a}}
        u^{-2/a}\log u;
\end{equation}
\item if \(a=\beta/2\), then
\begin{equation}\label{eq:integral-critical}
        I_\gamma(u)
        \sim
        u^{-4/\beta}\int_0^\infty\int_0^\infty
        e^{-\gamma(x^\beta+y^\beta+x^{\beta/2}y^{\beta/2})}\dd x\dd y;
\end{equation}
\item if \(a>\beta/2\), then
\begin{equation}\label{eq:integral-classical}
        I_\gamma(u)
        \sim
        \gamma^{-2/\beta}G_\beta^2u^{-4/\beta}.
\end{equation}
\end{enumerate}
Moreover, if \(0<\alpha<\beta\), \(a\ge a_0=\alpha\beta/(\alpha+\beta)\), \(R_u\to\infty\), and \(\log(R_u)=o(\log(u))\), then in the corresponding cases above
\begin{equation}\label{eq:integral-with-cutoff}
        \int_{R_uq_u}^\delta\int_{R_uq_u}^\delta
        e^{-\gamma u^2(x^\beta+y^\beta+x^ay^a)}\dd x\dd y
        \sim I_\gamma(u).
\end{equation}
Finally,
\begin{equation}\label{eq:side-integral}
        \int_0^\delta e^{-\gamma u^2x^\beta}\dd x
        \sim
        \gamma^{-1/\beta}G_\beta u^{-2/\beta}.
\end{equation}
\end{lemma}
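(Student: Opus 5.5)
The plan is to reduce every statement to a change of variables followed by dominated convergence. The only genuinely new computation is the logarithmic case (i), which I would handle by integrating out the small coordinate first. The side integral \eqref{eq:side-integral} follows from the substitution \(x=(\gamma u^2)^{-1/\beta}s\) and monotone convergence, since \(\int_0^{\delta(\gamma u^2)^{1/\beta}}e^{-s^\beta}\dd s\to G_\beta\). For (ii) and (iii) I substitute \(x=u^{-2/\beta}s\), \(y=u^{-2/\beta}t\), which gives
\[
        I_\gamma(u)=u^{-4/\beta}\int_0^{\delta u^{2/\beta}}\int_0^{\delta u^{2/\beta}}
        e^{-\gamma(s^\beta+t^\beta+u^{2-4a/\beta}s^at^a)}\dd s\dd t .
\]
The integrand is dominated by \(e^{-\gamma(s^\beta+t^\beta)}\), which is integrable. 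If \(a>\beta/2\), the factor \(u^{2-4a/\beta}\to0\) and the limit is \(\gamma^{-2/\beta}G_\beta^2\). If \(a=\beta/2\), the factor equals \(1\) and the limit is the critical integral.

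For (i), with \(a<\beta/2\), I use symmetry to write \(I_\gamma=2\int_0^\delta J_u(y)\dd y\), where \(J_u(y)=\int_0^{y}e^{-\gamma u^2(x^\beta+y^\beta+x^ay^a)}\dd x\). The heuristic is as follows. For fixed \(y\), the product term localizes \(x\) at scale \(u^{-2/a}y^{-1}\), and on the relevant range this dominates \(x^\beta\). Then
\[
J_u(y)\approx e^{-\gamma u^2y^\beta}\,\Gamma(1+1/a)\gamma^{-1/a}u^{-2/a}y^{-1},
\]
provided this \(x\)-scale is \(\le y\), i.e. \(y\gg u^{-1/a}\), and that \(x^\beta\ll x^ay^a\) there. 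Integrating \(y^{-1}\) over \(u^{-1/a}\ll y\ll u^{-2/\beta}\) gives \(\bigl(\tfrac1a-\tfrac2\beta\bigr)\log u=\tfrac{\beta-2a}{a\beta}\log u\). Together with the factor \(2\) and \(\Gamma(1+1/a)=\Gamma(1/a)/a\), this reproduces \eqref{eq:integral-log}.

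To make this rigorous, I substitute \(y=u^{-w}\) and \(x=u^{-2/a}y^{-1}\xi\), so that the \(y\)-integral becomes \(\log u\int u^{-2/a}(\dots)\dd w\). I then show the following.
\begin{enumerate}[(a)]
\item For each fixed \(w\in(2/\beta,1/a)\), the inner \(\xi\)-integral tends to \(\Gamma(1+1/a)\gamma^{-1/a}\). This holds because \(u^2y^\beta\to0\), \(\xi\)-range \(\to\infty\), and \(u^2x^\beta\to0\) for bounded \(\xi\). The last point needs \((2/a-w)\beta>2\), which is equivalent to \(w<2/a-2/\beta\) and is implied by \(w<1/a\).
\item The contributions from \(w\notin(2/\beta,1/a)\) are \(O(u^{-2/a})\) without the logarithm. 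For \(w<2/\beta\) this follows from the factor \(e^{-\gamma u^2y^\beta}\) and a crude bound \(J_u(y)\lesssim u^{-2/a}y^{-1}e^{-\gamma u^2 y^\beta}\). For \(w>1/a\) I bound \(J_u\le y\), and the region \(y\le u^{-1/a}\) has total mass \(O(u^{-2/a})\).
\end{enumerate}
Bounded convergence in \(w\) on the compact interval then gives the result. Throughout I use the uniform bound \(\int_0^\infty e^{-\gamma\xi^a}\dd\xi<\infty\) as the dominating function.

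For the cutoff statement \eqref{eq:integral-with-cutoff}, in cases (ii) and (iii) the removed strips have measure \(O(R_uq_u\,u^{-2/\beta})=o(u^{-4/\beta})\), because \(\alpha<\beta\) and \(\log R_u=o(\log u)\). In case (i) the removed region \(y<R_uq_u\) is negligible in the same way, and the real issue is the constraint \(x\ge R_uq_u\). In the \(w\)-parametrization, \(x\) lives at scale \(u^{-2/a+w}\), so the cutoff only affects those \(w\) with \(u^{-2/a+w}\lesssim R_uu^{-2/\alpha}\). This set is
\[
w\le \tfrac2a-\tfrac2\alpha+\tfrac{\log R_u}{\log u}=\tfrac2\beta+\bigl(\tfrac2a-\tfrac2\alpha-\tfrac2\beta\bigr)+o(1).
\]
When \(a\ge a_0\), the bracket is \(\le 0\), so only a window of \(w\) of length \(o(1)\) near \(2/\beta\) is lost, and this contributes \(o(\log u)\cdot u^{-2/a}\).

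I expect this boundary case \(a=a_0\) to be the main obstacle. There the removed window sits exactly at the edge of the logarithmic range, and I must check uniformly that the partial loss inside it is bounded by the \(\log R_u\) length rather than being merely pointwise small. I would handle it by bounding the integrand on that window by the same dominating function used in (a).
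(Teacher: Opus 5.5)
Your proposal is correct. For the side integral and for cases (ii)--(iii) it coincides with the paper's argument. For the logarithmic case (i) and for the cutoff statement you take a genuinely different route.

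\textbf{How the two routes differ in case (i).} The paper substitutes $X=u^2x^\beta$, $Y=u^2y^\beta$ and passes to $Z=XY$. This reduces everything to the one-dimensional integral $\int_0^\infty Z^{q-1}e^{-\gamma\lambda_u Z^p}A(Z)\dd Z$, where $A(Z)=\int_0^\infty X^{-1}e^{-\gamma X-\gamma Z/X}\dd X=-\log Z+O(1)$ is a $K_0$-type expansion. The logarithm comes from evaluating $A$ at $Z\asymp\lambda_u^{-\beta/a}$, and the constant falls out of a single Gamma integral.

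You instead integrate out the smaller coordinate at the product scale $u^{-2/a}y^{-1}$, which produces the weight $y^{-1}$. The parametrization $y=u^{-w}$ then turns the logarithm into the length $1/a-2/\beta$ of the window of active scales. Every limit is justified by bounded convergence from the single bound $F_u(w)\le\Gamma(1+1/a)\gamma^{-1/a}$.

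\textbf{What each route buys.} The paper's route is shorter. Yours makes explicit which scales carry the mass, and this pays off in the cutoff statement. Put $\xi_c=R_uu^{2/a-2/\alpha-w}$. The $x$-cutoff removes at most
\[
\int_0^{\xi_c}e^{-\gamma\xi^a}\dd\xi\le\min\{\xi_c,\Gamma(1+1/a)\gamma^{-1/a}\}
\]
from $F_u(w)$. Integrating this against $\log u\,\dd w$ bounds the total loss by $O(1+\log R_u)\,u^{-2/a}=o(u^{-2/a}\log u)$. This is exactly the uniform control you flagged as the obstacle at $a=a_0$. The paper, by contrast, only describes in orders of magnitude where $x,y$ live.

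\textbf{A small wording point.} In cases (ii)--(iii), what is $O(R_uq_u\,u^{-2/\beta})$ is the integral of the integrand over the removed strips, not their Lebesgue measure. It is bounded by $R_uq_u\int_0^\delta e^{-\gamma u^2y^\beta}\dd y$.
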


For the trend corollary, only the following elementary modifications are needed.  The functions \(L\) and \(K\) are those defined in \eqref{eq:Lc-def} and \eqref{eq:Kc-def}.

\begin{lemma}\label{lem:trend-integrals}
Let \(c_1,c_2\ge0\), \(\delta>0\), and define
\[
        I_{\boldsymbol c}(u)
        =\int_0^\delta\int_0^\delta
        e^{-u^2(x^2+y^2+x^ay^a)-u(c_1x+c_2y)}\dd x\dd y .
\]
Then, as \(u\to\infty\),
\begin{align}
        I_{\boldsymbol c}(u)
        &\sim
        \frac{2(1-a)\Gamma(1/a)}{a^2}u^{-2/a}\log u,
        &&0<a<1, \label{eq:trend-integral-log}\\
        I_{\boldsymbol c}(u)
        &\sim
        K(c_1,c_2)u^{-2},
        &&a=1, \label{eq:trend-integral-critical}\\
        I_{\boldsymbol c}(u)
        &\sim
        L(c_1)L(c_2)u^{-2},
        &&a>1. \label{eq:trend-integral-classical}
\end{align}
Also,
\begin{equation}\label{eq:trend-side-integral}
        \int_0^\delta e^{-u^2x^2-ucx}\dd x
        \sim u^{-1}L(c),
        \qquad c\ge0 .
\end{equation}
\end{lemma}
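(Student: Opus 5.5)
The plan is to handle the three regimes by rescaling, comparison with \Cref{lem:integrals} (taken with \(\beta=2\), \(\gamma=1\)) and monotone/dominated convergence. The key observation is that for \(\beta=2\) the trend \(u(c_1x+c_2y)\) lives on the scale \(x,y\asymp u^{-1}\). This is exactly the additive variance scale \(u^{-2/\beta}\).

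\emph{Side integral \eqref{eq:trend-side-integral} and the cases \(a\ge1\).} Substitute \(x=s/u\), and in the double integral also \(y=t/u\). The side integral becomes \(u^{-1}\int_0^{\delta u}e^{-s^2-cs}\dd s\), which tends to \(u^{-1}L(c)\) by monotone convergence. For the double integral, \(u^2(x^2+y^2)+u(c_1x+c_2y)=s^2+t^2+c_1s+c_2t\) and \(u^2x^ay^a=u^{2-2a}s^at^a\). Hence
\[
I_{\boldsymbol c}(u)=u^{-2}\int_0^{\delta u}\int_0^{\delta u}e^{-s^2-t^2-u^{2-2a}s^at^a-c_1s-c_2t}\dd s\dd t .
\]
\begin{itemize}
\item For \(a=1\) the integrand does not depend on \(u\), and monotone convergence gives \(K(c_1,c_2)u^{-2}\).
\item For \(a>1\) the product term tends to \(0\) pointwise. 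The integrand is dominated by \(e^{-s^2-t^2}\), so dominated convergence gives \(L(c_1)L(c_2)u^{-2}\).
\end{itemize}

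\emph{Logarithmic case \(0<a<1\), upper bound.} Since \(c_1,c_2\ge0\), the trend factor is at most one. Therefore \(I_{\boldsymbol c}(u)\le I_1(u)\), and \eqref{eq:integral-log} with \(\beta=2\), \(\gamma=1\) gives the upper bound \(\frac{2(1-a)\Gamma(1/a)}{a^2}u^{-2/a}\log u\,(1+o(1))\).

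\emph{Logarithmic case \(0<a<1\), lower bound.} This is the step I expect to be the main obstacle, because we must check that the region producing the logarithm is insensitive to the trend. I would first identify where the logarithm comes from. Fix \(y\) and integrate in \(x\):
\[
\int_0^\infty e^{-u^2y^ax^a}\dd x=\Gamma(1+1/a)u^{-2/a}y^{-1},
\]
with the relevant \(x\) living on the scale \(u^{-2/a}y^{-1}\). Integrating \(y^{-1}\dd y\) over \(u^{-1/a}\ll y\ll u^{-1}\) produces \((1/a-1)\log u\). Here \(y=u^{-1/a}\) is where the \(x\)-scale becomes comparable to \(y\), and \(y=u^{-1}\) is where \(e^{-u^2y^2}\) starts to cut off.

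Now take \(h_u=\log u\) and restrict to the set
\[
D_u=\bigl\{\,u^{-1/a}h_u\le y\le u^{-1}/h_u,\ \ 0\le x\le Ku^{-2/a}y^{-1}\,\bigr\}
\]
with \(K\) large but fixed, together with its mirror image in \(x\leftrightarrow y\). The two pieces are disjoint for large \(u\), since on \(D_u\) we have \(x\le Ku^{-1/a}/h_u<y\). On \(D_u\) the following hold uniformly:
\begin{itemize}
\item \(u^2y^2\le h_u^{-2}\to0\) and \(c_2uy\le c_2/h_u\to0\);
\item \(u^2x^2\le K^2u^{2-4/a}y^{-2}\le K^2u^{2-2/a}h_u^{-2}\to0\);
\item \(c_1ux\le c_1Ku^{1-2/a}y^{-1}\le c_1Ku^{1-1/a}/h_u\to0\).
\end{itemize}
So on \(D_u\) the integrand is at least \((1-o(1))e^{-u^2x^ay^a}\). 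Integrating in \(x\) gives
\[
(1-o(1))\,u^{-2/a}y^{-1}\int_0^{K^{a}}\ldots\ \ge\ (1-o(1))\bigl(\Gamma(1+1/a)-\eta(K)\bigr)u^{-2/a}y^{-1},
\]
with \(\eta(K)\to0\) as \(K\to\infty\). Integrating over \(y\) gives the factor \(\log\bigl(u^{1/a-1}h_u^{-2}\bigr)=(1/a-1)\log u\,(1+o(1))\). Doubling for the mirror region and letting \(K\to\infty\) yields the matching lower bound \(\frac{2(1-a)\Gamma(1/a)}{a^2}u^{-2/a}\log u\,(1+o(1))\).
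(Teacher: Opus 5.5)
Your proof is correct. For the side integral and the cases \(a\ge1\) it coincides with the paper's: rescale by \(u\), then apply monotone or dominated convergence. For \(0<a<1\) you take a genuinely different route.

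The paper repeats the substitution \(Z=XY\), now with the trend-modified kernel \(A_{\boldsymbol c}(Z)=\int_0^\infty X^{-1}\exp(-X^2-(Z/X)^2-c_1X-c_2Z/X)\dd X\). It uses \(A_{\boldsymbol c}(Z)=-\log Z+O(1)\) as \(Z\downarrow0\), and gets both bounds at once from \(\lambda_u^{-1/a}\int_0^\infty e^{-W^a}\bigl(\tfrac1a\log\lambda_u-\log W+O(1)\bigr)\dd W\). The observation that the trend only perturbs the bounded part of the kernel is what makes it transparent why the coefficient of \(\log u\) does not depend on \(c_1,c_2\).

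You instead sandwich the integral. Your upper bound is free: \(c_1,c_2\ge0\) gives \(I_{\boldsymbol c}(u)\le I_1(u)\), and \eqref{eq:integral-log} with \(\beta=2\), \(\gamma=1\) already carries the constant \(2(1-a)\Gamma(1/a)/a^2\). Your lower bound comes from the explicit regions \(D_u\) and their mirror images, on which \(u^2x^2\), \(u^2y^2\), \(c_1ux\), \(c_2uy\) are uniformly \(o(1)\). The individual steps check out: the bounds \(K^2u^{2-2/a}h_u^{-2}\) and \(c_1Ku^{1-1/a}/h_u\), the disjointness via \(x\le Ku^{-1/a}/h_u<y\), and the \(y\)-integral \((1/a-1)\log u-2\log h_u\).

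What your version buys is that it never analyzes \(A_{\boldsymbol c}\), and never has to justify carrying its \(O(1)\) remainder inside the \(W\)-integral. It also pins down exactly where the logarithm lives: \(u^{-1/a}\ll y\ll u^{-1}\), \(x\lesssim u^{-2/a}y^{-1}\), and symmetrically. This is the same localization the paper uses informally for \eqref{eq:integral-with-cutoff}.

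What it costs is self-containedness. Your upper bound rests on \Cref{lem:integrals}(i), whose proof is the same kernel argument without the trend. It also uses the sign condition \(c_i\ge0\) essentially, whereas the paper's kernel argument does not.
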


\section{Proofs}\label{sec:proofs}

\subsection{Proofs of the local probability bounds}

\begin{proof}[Proof of \Cref{lem:local-pickands}]
	Write
	\[
	\overline X(\bft)=\frac{X(\bft)}{\sigma(\bft)},
	\qquad
	D_u(\bfv)=B_u(\bfv;S_1,S_2).
	\]
	We first recall the finite-block Pickands estimate in the form in which it will be used.
	For every fixed \(S_1,S_2>0\) and every \(\eta>0\), after decreasing \(\delta>0\) if
	necessary, uniformly for \(\bfv\in[0,\delta]^2\), for all large \(u\), and for all
	thresholds \(w=u(1+\theta)\) with \(0\le \theta\le C_\delta\), where
	\(C_\delta\downarrow0\) as \(\delta\downarrow0\),
	\[
	\begin{aligned}
		\PP\left\{\sup_{\bft\in D_u(\bfv)}\overline X(\bft)>w\right\}
		&\le
		(1+\eta)\calH_\alpha(S_1)\calH_\alpha(S_2)\Psi(w),                                      \\
		\PP\left\{\sup_{\bft\in D_u(\bfv)}\overline X(\bft)>w\right\}
		&\ge
		(1-\eta)\calH_\alpha(S_1)\calH_\alpha(S_2)\Psi(w).
	\end{aligned}
	\]
	Indeed, applying the usual finite-block Pickands lemma to the standardized field
	\(\overline X\), see for instance \cite[Lemma 6.1]{Piterbarg1996}, and using
	\ref{ass:correlation}, the local tangent field on a rectangle is
	\[
	\sqrt2 B_{\alpha,1}(s_1)-s_1^\alpha
	+\sqrt2 B_{\alpha,2}(s_2)-s_2^\alpha,
	\]
	where \(B_{\alpha,1}\) and \(B_{\alpha,2}\) are independent fractional Brownian motions.
	Hence the two-dimensional Pickands constant factors into
	\(\calH_\alpha(S_1)\calH_\alpha(S_2)\).  If \(w/u\ne1\), then the block sides in the
	\(w^{-2/\alpha}\)-scale are \(S_i(w/u)^{2/\alpha}\); since \(w/u\) is uniformly close to
	one and \(S\mapsto\calH_\alpha(S)\) is continuous, this only changes the above estimates by
	the factor \(1\pm\eta\).
	
	We prove the upper bound first.  By \ref{ass:variance} and the monotonicity of \(V\) on
	\(\mathbb R_+^2\), for all sufficiently small \(\delta\) and all large \(u\),
	\[
	\sigma(\bft)
	\le 1-(1-\eta)V(\bft)
	\le 1-(1-\eta)V(\bfv),
	\qquad \bft\in D_u(\bfv).
	\]
	Reducing \(\delta\) once more, this implies
	\[
	\frac{u}{\sigma(\bft)}
	\ge u\bigl(1+(1-2\eta)V(\bfv)\bigr),
	\qquad \bft\in D_u(\bfv).
	\]
	Therefore
	\[
	\begin{aligned}
		\PP\left\{\sup_{\bft\in D_u(\bfv)}X(\bft)>u\right\}
		&\le
		\PP\left\{\sup_{\bft\in D_u(\bfv)}\overline X(\bft)
		>u\bigl(1+(1-2\eta)V(\bfv)\bigr)\right\}       \\
		&\le
		(1+\eta)\calH_\alpha(S_1)\calH_\alpha(S_2)
		\Psi\left(u\bigl(1+(1-2\eta)V(\bfv)\bigr)\right).
	\end{aligned}
	\]
	Since \(V(\bfv)\to0\) uniformly for \(\bfv\in[0,\delta]^2\) as \(\delta\downarrow0\),
	Mills' ratio gives, uniformly in the present range of \(\bfv\),
	\[
	\Psi\left(u\bigl(1+(1-2\eta)V(\bfv)\bigr)\right)
	\le
	(1+\eta)\Psi(u)\exp\left(-(1-3\eta)u^2V(\bfv)\right),
	\]
	after decreasing \(\delta\) and taking \(u\) large.  Choosing \(\eta\) small enough in terms
	of the prescribed \(\varepsilon\) gives \eqref{eq:block-upper}.
	
	We now prove the lower bound.  Assume that \(v_i\ge R_uq_u\), \(i=1,2\), with
	\(R_u\to\infty\).  Since \(S_1,S_2\) are fixed, uniformly for
	\(\bft\in D_u(\bfv)\),
	\[
	v_i\le t_i\le v_i\left(1+\frac{S_i}{R_u}\right),
	\qquad i=1,2,
	\]
	for all large \(u\).  Hence each of the three monomials
	\(t_1^\beta\), \(t_2^\beta\), and \(t_1^at_2^a\) differs from its value at \(\bfv\) by a
	relative \(O(R_u^{-1})\) amount.  Consequently
	\[
	\sup_{\bft\in D_u(\bfv)}|V(\bft)-V(\bfv)|
	=O(R_u^{-1})V(\bfv)
	=o(V(\bfv)),
	\]
	uniformly over the blocks under consideration.  Using \ref{ass:variance}, it follows that
	\[
	\sigma(\bft)\ge 1-(1+\eta)V(\bft)
	\ge 1-(1+2\eta)V(\bfv),
	\qquad \bft\in D_u(\bfv),
	\]
	for all large \(u\).  After reducing \(\delta\), we also have
	\[
	\frac{u}{\sigma(\bft)}
	\le u\bigl(1+(1+3\eta)V(\bfv)\bigr),
	\qquad \bft\in D_u(\bfv).
	\]
	Thus
	\[
	\begin{aligned}
		\PP\left\{\sup_{\bft\in D_u(\bfv)}X(\bft)>u\right\}
		&\ge
		\PP\left\{\sup_{\bft\in D_u(\bfv)}\overline X(\bft)
		>u\bigl(1+(1+3\eta)V(\bfv)\bigr)\right\}       \\
		&\ge
		(1-\eta)\calH_\alpha(S_1)\calH_\alpha(S_2)
		\Psi\left(u\bigl(1+(1+3\eta)V(\bfv)\bigr)\right).
	\end{aligned}
	\]
	The lower Mills-ratio estimate, again uniformly in the localized region, gives
	\[
	\Psi\left(u\bigl(1+(1+3\eta)V(\bfv)\bigr)\right)
	\ge
	(1-\eta)\Psi(u)\exp\left(-(1+4\eta)u^2V(\bfv)\right).
	\]
	Taking \(\eta\) sufficiently small and relabelling constants yields \eqref{eq:block-lower}.
\end{proof}

\begin{proof}[Proof of \Cref{lem:rectangle-bounds}]
	Fix \(\varepsilon>0\), and let \(0<\varepsilon_0<\varepsilon/10\).  Choose
	\(\delta>0\) so small that \Cref{lem:local-pickands} is valid, with
	\(\varepsilon_0\) in place of \(\varepsilon\), for all blocks contained in
	\([0,2\delta]^2\).  Put \(q_u=u^{-2/\alpha}\).
	
	We start with the upper bound.  Let
	\[
	N_u=\left\lceil \frac{\delta}{M q_u}\right\rceil
	\]
	and cover \([0,\delta]^2\) by the rectangles
	\[
	P_{\boldsymbol{k}}
	=
	[k_1Mq_u,(k_1+1)Mq_u]\times
	[k_2Mq_u,(k_2+1)Mq_u],
	\qquad
	0\le k_1,k_2\le N_u-1 .
	\]
	For all large \(u\), these rectangles are contained in \([0,2\delta]^2\).
	The union bound and \Cref{lem:local-pickands} give
	\[
	\begin{aligned}
		&\PP\left\{\sup_{\bft\in[0,\delta]^2}X(\bft)>u\right\}
		\\
		&\qquad\le
		(1+\varepsilon_0)\calH_\alpha(M)^2\Psi(u)
		\sum_{0\le k_1,k_2\le N_u-1}
		\exp\left\{-(1-\varepsilon_0)u^2
		V(k_1Mq_u,k_2Mq_u)\right\}.
	\end{aligned}
	\]
	We estimate the last sum by monotonicity of \(V\).  The term
	\((k_1,k_2)=(0,0)\) gives the corner contribution.  If, for example,
	\(k_1=0\) and \(k_2\ge1\), then
	\(V(0,k_2Mq_u)=(k_2Mq_u)^\beta\), and
	\[
	\exp\left\{-(1-\varepsilon_0)u^2(k_2Mq_u)^\beta\right\}
	\le
	\frac{u^{2/\alpha}}{M}
	\int_{(k_2-1)Mq_u}^{k_2Mq_u}
	\exp\left\{-(1-2\varepsilon_0)u^2x^\beta\right\}\dd x .
	\]
	The same estimate applies to the other side.  Finally, if \(k_1,k_2\ge1\), then
	for \(x\in[(k_1-1)Mq_u,k_1Mq_u]\) and
	\(y\in[(k_2-1)Mq_u,k_2Mq_u]\),
	\[
	V(x,y)\le V(k_1Mq_u,k_2Mq_u),
	\]
	and therefore
	\[
	\begin{aligned}
		&\exp\left\{-(1-\varepsilon_0)u^2
		V(k_1Mq_u,k_2Mq_u)\right\}                                      \\
		&\qquad\le
		\frac{u^{4/\alpha}}{M^2}
		\int_{(k_1-1)Mq_u}^{k_1Mq_u}
		\int_{(k_2-1)Mq_u}^{k_2Mq_u}
		\exp\left\{-(1-2\varepsilon_0)u^2V(x,y)\right\}\dd x\dd y .
	\end{aligned}
	\]
	Summing these estimates and then replacing \(\varepsilon_0\) by a comparable
	multiple of \(\varepsilon\) gives \eqref{eq:upper-rectangles}.
	
	We now prove the lower bound.  It is convenient to use separated rectangles, in
	order to avoid estimating intersections of adjacent blocks.  Let
	\[
	m=m(M)=\lfloor M^{1/2}\rfloor\vee1,
	\qquad
	L=M+m,
	\qquad
	r_u=R_uq_u .
	\]
	Thus \(m\to\infty\) and \(m/M\to0\) as \(M\to\infty\).  For all large \(u\),
	\(r_u\to0\).  Let
	\[
	n_u=
	\left\lfloor
	\frac{\delta-r_u-Mq_u}{Lq_u}
	\right\rfloor
	\]
	and, for \(0\le k_1,k_2\le n_u\), define
	\[
	\bfv_{\boldsymbol{k}}
	=
	(r_u+k_1Lq_u,\ r_u+k_2Lq_u),
	\qquad
	P_{\boldsymbol{k}}=B_u(\bfv_{\boldsymbol{k}};M,M).
	\]
	The blocks \(P_{\boldsymbol{k}}\) are contained in \([r_u,\delta]^2\), and
	their mutual gaps have width at least \(m q_u\).  Set
	\[
	A_{\boldsymbol{k}}
	=
	\left\{\sup_{\bft\in P_{\boldsymbol{k}}}X(\bft)>u\right\}.
	\]
	By Bonferroni's inequality,
	\[
	\PP\left\{\sup_{\bft\in[0,\delta]^2}X(\bft)>u\right\}
	\ge
	\sum_{\boldsymbol{k}}\PP(A_{\boldsymbol{k}})
	-
	\sum_{\boldsymbol{k}\ne\boldsymbol{l}}
	\PP(A_{\boldsymbol{k}}\cap A_{\boldsymbol{l}}).
	\]
	
	Since \(v_{\boldsymbol{k},i}\ge r_u=R_uq_u\), \(i=1,2\), the lower estimate in
	\Cref{lem:local-pickands} yields
	\begin{align}
	\label{explb}
	\sum_{\boldsymbol{k}}\PP(A_{\boldsymbol{k}})
	\ge
	(1-\varepsilon_0)\calH_\alpha(M)^2\Psi(u)
	\sum_{\boldsymbol{k}}
	\exp\left\{-(1+\varepsilon_0)u^2V(\bfv_{\boldsymbol{k}})\right\}.
	\end{align}
	Let
	\[
	\mathcal U_u=\bigcup_{\boldsymbol{k}}P_{\boldsymbol{k}}.
	\]
	Since \(V\) is coordinatewise increasing on \(\mathbb R_+^2\),
	\begin{align}
		\label{intlb}
		\sum_{\boldsymbol{k}}
		\exp\left\{-(1+\varepsilon_0)u^2V(\bfv_{\boldsymbol{k}})\right\}
		\ge
		\frac{u^{4/\alpha}}{M^2}
		\int_{\mathcal U_u}
		\exp\left\{-(1+\varepsilon)u^2V(x,y)\right\}\dd x\dd y .
	\end{align}
	The gaps occupy only a proportion \(1-\frac{M^2}{(M+m)^2}= O(m/M)\).  Hence,
	because the integrands
	\[
	\exp\left\{-(1+\varepsilon)u^2V(x,y)\right\},
	\qquad
	\exp\left\{-(1-\varepsilon)u^2V(x,y)\right\},
	\]
	are coordinatewise decreasing, a deterministic comparison of the retained
	blocks with the omitted gaps gives
	\[
	\begin{aligned}
		&\int_{r_u}^{\delta}\int_{r_u}^{\delta}
		\exp\left\{-(1+\varepsilon)u^2V(x,y)\right\}\dd x\dd y
		-
		\int_{\mathcal U_u}
		\exp\left\{-(1+\varepsilon)u^2V(x,y)\right\}\dd x\dd y            \\
		&\qquad\le
		C\frac{m}{M}
		\int_0^\delta\int_0^\delta
		\exp\left\{-(1-\varepsilon)u^2V(x,y)\right\}\dd x\dd y
		+
		o\bigl(I_\delta(u)\bigr),
	\end{aligned}
	\]
	where the \(o(I_\delta(u))\) term comes only from the last incomplete strips
	near \(x=\delta\) or \(y=\delta\), whose contribution is exponentially small.
	Since \(\calH_\alpha(M)^2/M^2\) is bounded as \(M\to\infty\), by \eqref{explb} and \eqref{intlb} the single sum is
	therefore bounded from below by
	\[
	\begin{aligned}
		&(1-\varepsilon)\frac{\calH_\alpha(M)^2}{M^2}
		u^{4/\alpha}\Psi(u)
		\int_{r_u}^{\delta}\int_{r_u}^{\delta}
		\exp\left\{-(1+\varepsilon)u^2V(x,y)\right\}\dd x\dd y             \\
		&\qquad
		-\eta_1(M)\Psi(u)u^{4/\alpha}
		\int_0^\delta\int_0^\delta
		\exp\left\{-(1-\varepsilon)u^2V(x,y)\right\}\dd x\dd y
		+
		o\left(\Psi(u)u^{4/\alpha}I_\delta(u)\right),
	\end{aligned}
	\]
	with \(\eta_1(M)\downarrow0\).
	
	We now estimate the double sum.  Put
	\[
	\overline X(\bft)=\frac{X(\bft)}{\sigma(\bft)} .
	\]
	By \ref{ass:variance}, after decreasing \(\delta\) if necessary, for every block
	\(P_{\boldsymbol{k}}\) used in the lower bound and every \(\bft\in P_{\boldsymbol{k}}\),
	\[
	\frac{u}{\sigma(\bft)}
	\ge
	w_{\boldsymbol{k}},
	\qquad
	w_{\boldsymbol{k}}
	:=
	u\bigl(1+(1-2\varepsilon_0)V(\bfv_{\boldsymbol{k}})\bigr).
	\]
	Hence
	\[
	A_{\boldsymbol{k}}
	\subset
	\left\{
	\sup_{\bft\in P_{\boldsymbol{k}}}\overline X(\bft)
	>w_{\boldsymbol{k}}
	\right\}.
	\]
	
	Let \(\boldsymbol{k}\ne\boldsymbol{l}\), and define
	\[
	\rho_{\boldsymbol{k},\boldsymbol{l}}
	=
	\sup_{\bft\in P_{\boldsymbol{k}},\,\bfs\in P_{\boldsymbol{l}}}
	r(\bft,\bfs),
	\qquad
	w_{\boldsymbol{k},\boldsymbol{l}}
	=
	w_{\boldsymbol{k}}\wedge w_{\boldsymbol{l}} .
	\]
	For \((\bft,\bfs)\in P_{\boldsymbol{k}}\times P_{\boldsymbol{l}}\), introduce the
	four-dimensional centered Gaussian field
	\[
	Y_{\boldsymbol{k},\boldsymbol{l}}(\bft,\bfs)
	=
	\frac{\overline X(\bft)+\overline X(\bfs)}
	{\sqrt{2+2\rho_{\boldsymbol{k},\boldsymbol{l}}}} .
	\]
	Since \(r(\bft,\bfs)\le \rho_{\boldsymbol{k},\boldsymbol{l}}\), we have
	\[
	\Var\bigl(Y_{\boldsymbol{k},\boldsymbol{l}}(\bft,\bfs)\bigr)
	=
	\frac{2+2r(\bft,\bfs)}
	{2+2\rho_{\boldsymbol{k},\boldsymbol{l}}}
	\le 1 .
	\]
	Moreover, by \ref{ass:correlation}, uniformly over the localized region,
	\[
	\begin{aligned}
		&\E\left[
		Y_{\boldsymbol{k},\boldsymbol{l}}(\bft,\bfs)
		-
		Y_{\boldsymbol{k},\boldsymbol{l}}(\bft',\bfs')
		\right]^2                                                \\
		&\qquad\le
		C\left(
		|\bft-\bft'|^\alpha+|\bfs-\bfs'|^\alpha
		\right),
	\end{aligned}
	\]
	where \(C\) does not depend on \(u,M,\boldsymbol{k},\boldsymbol{l}\).
	
	If both
	\(\overline X(\bft)>w_{\boldsymbol{k}}\) and
	\(\overline X(\bfs)>w_{\boldsymbol{l}}\), then
	\[
	Y_{\boldsymbol{k},\boldsymbol{l}}(\bft,\bfs)
	>
	z_{\boldsymbol{k},\boldsymbol{l}},
	\qquad
	z_{\boldsymbol{k},\boldsymbol{l}}
	:=
	w_{\boldsymbol{k},\boldsymbol{l}}
	\sqrt{\frac{2}{1+\rho_{\boldsymbol{k},\boldsymbol{l}}}} .
	\]
	Therefore
	\[
	A_{\boldsymbol{k}}\cap A_{\boldsymbol{l}}
	\subset
	\left\{
	\sup_{(\bft,\bfs)\in P_{\boldsymbol{k}}\times P_{\boldsymbol{l}}}
	Y_{\boldsymbol{k},\boldsymbol{l}}(\bft,\bfs)
	>
	z_{\boldsymbol{k},\boldsymbol{l}}
	\right\}.
	\]
	
	Applying Piterbarg inequality \cite[Lemma 5.1]{gammareflected} with \(n=4\) to
	\(D=P_{\boldsymbol{k}}\times P_{\boldsymbol{l}}\), translated to a rectangle
	with side lengths \(Mq_u,Mq_u,Mq_u,Mq_u\), gives
	\[
	\begin{aligned}
		\PP(A_{\boldsymbol{k}}\cap A_{\boldsymbol{l}})
		&\le
		C\left(1+Mq_u z_{\boldsymbol{k},\boldsymbol{l}}^{2/\alpha}\right)^4
		\Psi(z_{\boldsymbol{k},\boldsymbol{l}}).
	\end{aligned}
	\]
	Since \(q_u=u^{-2/\alpha}\) and
	\(z_{\boldsymbol{k},\boldsymbol{l}}\le Cu\) in the localized region, we have
	\[
	1+Mq_u z_{\boldsymbol{k},\boldsymbol{l}}^{2/\alpha}
	\le C(1+M),
	\]
	and hence
	\[
	\PP(A_{\boldsymbol{k}}\cap A_{\boldsymbol{l}})
	\le
	C(1+M)^4\Psi(z_{\boldsymbol{k},\boldsymbol{l}}).
	\]
	
	It remains to estimate the level \(z_{\boldsymbol{k},\boldsymbol{l}}\).  Denote
	\[
	d_{\boldsymbol{k},\boldsymbol{l}}
	=
	\max_{i=1,2}\bigl(L|k_i-l_i|-M\bigr)_+ .
	\]
	The distance between the two rectangles is at least
	\(d_{\boldsymbol{k},\boldsymbol{l}}q_u\) in one coordinate.  Hence, by
	\ref{ass:correlation} when this distance tends to zero, and by
	\ref{ass:nondegenerate} when the rectangles are separated by a fixed positive
	distance, there exists \(c>0\), independent of
	\(u,M,\boldsymbol{k},\boldsymbol{l}\), such that
	\[
	1-\rho_{\boldsymbol{k},\boldsymbol{l}}
	\ge
	c\,d_{\boldsymbol{k},\boldsymbol{l}}^\alpha u^{-2}.
	\]
	Indeed, if \(d_{\boldsymbol{k},\boldsymbol{l}}q_u\) is small, this follows from
	the local correlation expansion, since
	\[
	(d_{\boldsymbol{k},\boldsymbol{l}}q_u)^\alpha
	=
	d_{\boldsymbol{k},\boldsymbol{l}}^\alpha u^{-2}.
	\]
	If \(d_{\boldsymbol{k},\boldsymbol{l}}q_u\) is bounded away from zero, then
	\ref{ass:nondegenerate} gives \(1-\rho_{\boldsymbol{k},\boldsymbol{l}}\ge c_0\),
	whereas
	\(d_{\boldsymbol{k},\boldsymbol{l}}^\alpha u^{-2}\) is uniformly bounded; reducing \(c\) gives the same bound.
	
	Consequently,
	\[
	\frac{2}{1+\rho_{\boldsymbol{k},\boldsymbol{l}}}
	\ge
	1+c\,d_{\boldsymbol{k},\boldsymbol{l}}^\alpha u^{-2},
	\]
	with \(c>0\) possibly changed.  Since
	\[
	w_{\boldsymbol{k},\boldsymbol{l}}^2
	=u^2(1+(1-2 \varepsilon_0)\min\{V(\bfv_{\boldsymbol{k}}),V(\bfv_{\boldsymbol{l}})\})^2
	\ge
	u^2\left(
	1+2(1-3\varepsilon_0)
	\min\{V(\bfv_{\boldsymbol{k}}),V(\bfv_{\boldsymbol{l}})\}
	\right),
	\]
	we obtain
	\[
	z_{\boldsymbol{k},\boldsymbol{l}}^2
	\ge
	u^2
	+c\,d_{\boldsymbol{k},\boldsymbol{l}}^\alpha
	+2(1-4\varepsilon_0)u^2
	\min\{V(\bfv_{\boldsymbol{k}}),V(\bfv_{\boldsymbol{l}})\}.
	\]
	Mills' ratio therefore yields
	\[
	\begin{aligned}
		\Psi(z_{\boldsymbol{k},\boldsymbol{l}})
		&\le
		C\Psi(u)
		\exp\{-c\,d_{\boldsymbol{k},\boldsymbol{l}}^\alpha\}
		\exp\left\{
		-(1-4\varepsilon_0)u^2
		\min\{V(\bfv_{\boldsymbol{k}}),V(\bfv_{\boldsymbol{l}})\}
		\right\}                                                        \\
		&\le
		C\Psi(u)
		\exp\{-c\,d_{\boldsymbol{k},\boldsymbol{l}}^\alpha\}
		\left[
		\exp\{-(1-4\varepsilon_0)u^2V(\bfv_{\boldsymbol{k}})\}
		+
		\exp\{-(1-4\varepsilon_0)u^2V(\bfv_{\boldsymbol{l}})\}
		\right].
	\end{aligned}
	\]
	After replacing \(\varepsilon_0\) by a smaller number at the beginning of the
	proof, this gives
	\begin{align}
		\label{doublebound}
		\PP(A_{\boldsymbol{k}}\cap A_{\boldsymbol{l}})
		&\le
		C(1+M)^4\Psi(u)
		\exp\{-c\,d_{\boldsymbol{k},\boldsymbol{l}}^\alpha\}      \notag           \\
		&\qquad\times
		\left[
		\exp\{-(1-\varepsilon_0)u^2V(\bfv_{\boldsymbol{k}})\}
		+
		\exp\{-(1-\varepsilon_0)u^2V(\bfv_{\boldsymbol{l}})\}
		\right].
	\end{align}
	
	Moreover,
	\[
	\sup_{\boldsymbol{k}}
	\sum_{\boldsymbol{l}\ne\boldsymbol{k}}
	\exp\{-c d_{\boldsymbol{k},\boldsymbol{l}}^\alpha\}
	\le
	C\exp\{-c m^\alpha/2\}
	\]
	for all large \(M\).  Hence
	\[
	\begin{aligned}
		\sum_{\boldsymbol{k}\ne\boldsymbol{l}}
		\PP(A_{\boldsymbol{k}}\cap A_{\boldsymbol{l}})
		&\le
		\eta_2(M)\Psi(u)
		\sum_{\boldsymbol{k}}
		\exp\left\{-(1-\varepsilon_0)u^2V(\bfv_{\boldsymbol{k}})\right\},
	\end{aligned}
	\]
	where
	\[
	\eta_2(M)=C(1+M)^4\exp\{-c m^\alpha/2\}\longrightarrow0,
	\qquad M\to\infty,
	\]
	because \(m=\lfloor M^{1/2}\rfloor\).
	
	The inequality
	\[
		\exp\left\{-(1-\varepsilon_0)u^2V(\bfv_{\boldsymbol{k}})\right\}
		\leq 
		\frac{1}{M^2 q_u^2} 
		\int_{v_{\boldsymbol{k},1}-Mq_u}^{v_{\boldsymbol{k},1}}\int_{v_{\boldsymbol{k},2}-Mq_u}^{v_{\boldsymbol{k},2}}
		\exp\left\{-(1-\varepsilon_0)u^2V(x,y)\right\} \dd x \dd y
	\]
	gives a Riemann-sum upper comparison
	\[
	\sum_{\boldsymbol{k}}
	\exp\left\{-(1-\varepsilon_0)u^2V(\bfv_{\boldsymbol{k}})\right\}
	\le
	C\frac{u^{4/\alpha}}{M^2}
	\int_0^\delta\int_0^\delta
	\exp\left\{-(1-\varepsilon)u^2V(x,y)\right\}\dd x\dd y .
	\]
	Therefore
	\[
	\begin{aligned}
		\sum_{\boldsymbol{k}\ne\boldsymbol{l}}
		\PP(A_{\boldsymbol{k}}\cap A_{\boldsymbol{l}})
		&\le
		\eta(M)\Psi(u)u^{4/\alpha}
		\int_0^\delta\int_0^\delta
		\exp\left\{-(1-\varepsilon)u^2V(x,y)\right\}\dd x\dd y,
	\end{aligned}
	\]
	with \(\eta(M)\downarrow0\).
	
	Combining the lower bound for the single sum with the above estimate of the
	double sum proves \eqref{eq:lower-rectangles}.
\end{proof}

\begin{proof}[Proof of \Cref{lem:side-bounds}]
Start with the upper bound.
Consider \(S_{1,u}(\lambda)=[0,\delta]\times[0,\lambda q_u]\).  Partition only the long side into intervals of length \(Mq_u\).  The second coordinate is kept in a single block of length \(\lambda q_u\).  Applying \Cref{lem:local-pickands} gives the factor \(\calH_\alpha(M)\calH_\alpha(\lambda)\).  Since the variance loss is bounded below by \((1-o(1))x^\beta\) on a block whose long-coordinate base point is \(x\), summation over the long coordinate gives the first side contribution in \eqref{eq:upper-sides}.  The second side is identical.  The two small corner blocks contribute at most \(2(1+\varepsilon)\calH_\alpha(\lambda)^2\Psi(u)\), which is the first term in the bracket.

The lower bound is the one-dimensional version of \Cref{lem:rectangle-bounds}, applied to the restrictions \(X(t,0)\) and \(X(0,t)\).  Their local correlation is \(1-|t-s|^\alpha(1+o(1))\), and their variance loss is \(t^\beta(1+o(1))\). By \eqref{doublebound} the intersection of the two side-exceedance events has probability $O(\Psi(u))$ which is negligible compared with \(u^{2/\alpha-2/\beta}\Psi(u)\), because $R_u q_u =o(u^{-2/\beta})$ and $R_u \to \infty$ implies \(\alpha<\beta\).  This proves \eqref{eq:lower-sides}.
\end{proof}

\subsection{Proof of the integral estimates}

\begin{proof}[Proof of \Cref{lem:integrals}]
The one-dimensional estimate \eqref{eq:side-integral} follows from the change of variables \(z=\gamma^{1/\beta}u^{2/\beta}x\):
\[
        \int_0^\delta e^{-\gamma u^2x^\beta}\dd x
        \sim
        \gamma^{-1/\beta}u^{-2/\beta}\int_0^\infty e^{-z^\beta}\dd z
        =\gamma^{-1/\beta}G_\beta u^{-2/\beta}.
\]

For the two-dimensional integral $I_\gamma(u)$, set
\[
        x=u^{-2/\beta}X^{1/\beta},
        \qquad
        y=u^{-2/\beta}Y^{1/\beta},
        \qquad
        q=\frac1\beta,
        \qquad
        p=\frac a\beta .
\]
Put \(\lambda_u=u^{2-4a/\beta}\).  Then
\begin{equation}\label{eq:integral-transformed}
        I_\gamma(u)
        =\frac{u^{-4/\beta}}{\beta^2}
        \int_0^{\delta^\beta u^2}\int_0^{\delta^\beta u^2}
        X^{q-1}Y^{q-1}e^{-\gamma X-\gamma Y-\gamma\lambda_u(XY)^p}\dd X\dd Y.
\end{equation}

If \(a>\beta/2\), then \(\lambda_u\to0\), and dominated convergence in \eqref{eq:integral-transformed} gives \eqref{eq:integral-classical}.  If \(a=\beta/2\), then \(\lambda_u=1\), and the change of variables
\[
	X=x^{\beta}, \qquad Y = y^{\beta}
\]
yields \eqref{eq:integral-critical}.

It remains to consider \(0<a<\beta/2\), so that \(\lambda_u\to\infty\).  We use the following elementary asymptotic, valid for \(p,q,\gamma>0\):
\begin{equation}\label{eq:J-asymptotic}
\begin{aligned}
J(\lambda)
&=\int_0^\infty\int_0^\infty
X^{q-1}Y^{q-1}e^{-\gamma X-\gamma Y-\gamma\lambda(XY)^p}\dd X\dd Y  \\
&\sim
\frac{\Gamma(q/p)}{p^2\gamma^{q/p}}\lambda^{-q/p}\log\lambda,
        \qquad \lambda\to\infty .
\end{aligned}
\end{equation}
Indeed, make the change of variables \(Z=XY\), keeping \(X\) as the second variable.  Then
\[
J(\lambda)=\int_0^\infty Z^{q-1}e^{-\gamma\lambda Z^p}A(Z)\dd Z,
\qquad
A(Z)=\int_0^\infty X^{-1}e^{-\gamma X-\gamma Z/X}\dd X .
\]
The elementary estimate
\begin{equation}\label{eq:A-log-estimate}
        A(Z)=-\log Z+O(1),\qquad Z\downarrow0,
\end{equation}
holds because the part \(X\in[Z,1]\) gives \(-\log Z+O(1)\) (the influence of $e^{-\gamma X- \gamma Z/X}$ is negligible), while the two complementary intervals give bounded contributions.  Also \(A(Z)\) is exponentially small as \(Z\to\infty\).  Hence the integral is concentrated at \(Z=O(\lambda^{-1/p})\), and, with \(W=\lambda^{1/p}Z\),
\begin{align*}
J(\lambda)
&=\lambda^{-q/p}\int_0^\infty
        W^{q-1}e^{-\gamma W^p}
        \left(\frac{1}{p}\log\lambda-\log W+O(1)\right)\dd W  \\
&\sim
\lambda^{-q/p}\frac{\log\lambda}{p}
        \int_0^\infty W^{q-1}e^{-\gamma W^p}\dd W \\
&=\frac{\Gamma(q/p)}{p^2\gamma^{q/p}}\lambda^{-q/p}\log\lambda .
\end{align*}
This proves \eqref{eq:J-asymptotic}; the truncation at \(\delta^\beta u^2\) in \eqref{eq:integral-transformed} changes the integral only by an exponentially small quantity.

Since \(q/p=1/a\), \(p=a/\beta\), and
\[
        \log\lambda_u=\left(2-\frac{4a}{\beta}\right)\log u,
\]
combining \eqref{eq:integral-transformed} and \eqref{eq:J-asymptotic} gives
\[
        I_\gamma(u)
        \sim
        \frac{\Gamma(1/a)}{a^2\gamma^{1/a}}
        \left(2-\frac{4a}{\beta}\right)u^{-2/a}\log u,
\]
which is \eqref{eq:integral-log}.

It remains to justify the cutoff statement \eqref{eq:integral-with-cutoff}.  In the cases \(a\ge\beta/2\), the integral is concentrated on the scale \(u^{-2/\beta}\) in both coordinates (because $X$ is concentrated on the scale $1$), while \(R_uq_u=o(u^{-2/\beta})\); removing \([0,R_uq_u]\) from either coordinate is negligible.  In the logarithmic case \(a_0\le a<\beta/2\), $R_u q_u$ is of order \(u^{-2/\alpha+o(1)}\). However, $W$ is of order $1$, thus, $Z$ is of order $\lambda^{-1/p}=\lambda_u^{-\beta/a}=u^{4-2 \beta/a}$, so $X$ is of order from $u^{4-2 \beta/a}$ to $1$. Observe that for $\epsilon \in (0,1/2)$ the part $X \in [Z^{1-\epsilon},Z^\epsilon]$ in $A(Z)$ gives $-(1-2 \epsilon) \log(Z)+O(1)$, so $X$ of order from $u^{4-2\beta/a+\epsilon}$ to $u^{-\epsilon}$ give a proportion of the principal part of $I_\gamma$ tending to $1$  as $\epsilon \to 0$.

Then $Y=Z/X$ is of order from $u^{4-2 \beta/a+\epsilon}$ to $u^{-\epsilon}$. Hence, $x,y$ are of order from $u^{2/\beta-2/a+\epsilon/\beta}$ to $u^{-2/\beta-\epsilon/\beta}$.  However, given $a \geq a_0$, $\log(R_u)=o(\log(u))$ implies $R_u q_u = o(u^{-2/\alpha+\epsilon/\beta})=o(u^{2/\beta-2/a+\epsilon/\beta})$ for each $\epsilon>0$.  Thus \eqref{eq:integral-with-cutoff} follows.
\end{proof}

\begin{proof}[Proof of \Cref{lem:trend-integrals}]
The side estimate \eqref{eq:trend-side-integral} follows from the change of variables \(z=ux\):
\[
        \int_0^\delta e^{-u^2x^2-ucx}\dd x
        \sim
        u^{-1}\int_0^\infty e^{-z^2-cz}\dd z=u^{-1}L(c).
\]

For the two-dimensional integral, put \(X=ux\), \(Y=uy\).  Then
\[
        I_{\boldsymbol c}(u)
        =u^{-2}\int_0^{u\delta}\int_0^{u\delta}
        e^{-X^2-Y^2-u^{2-2a}(XY)^a-c_1X-c_2Y}\dd X\dd Y.
\]
If \(a>1\), the coefficient \(u^{2-2a}\) tends to zero, and dominated convergence gives \eqref{eq:trend-integral-classical}.  If \(a=1\), the same display directly gives \eqref{eq:trend-integral-critical}.

Let \(0<a<1\) and set \(\lambda_u=u^{2-2a}\).  For the untruncated integral in the last display, use again \(Z=XY\).  The inner integral is now
\[
A_{\boldsymbol c}(Z)
=\int_0^\infty X^{-1}
\exp\left(-X^2-(Z/X)^2-c_1X-c_2Z/X\right)\dd X .
\]
As in \eqref{eq:A-log-estimate}, \(A_{\boldsymbol c}(Z)=-\log Z+O(1)\) as \(Z\downarrow0\), and it decays exponentially as
\(Z \to \infty\).  Therefore, 
\begin{align*}
&\int_0^{u\delta}\int_0^{u\delta}
        e^{-X^2-Y^2-u^{2-2a}(XY)^a-c_1X-c_2Y}\dd X\dd Y  \\
        &\qquad\sim \int_0^\infty e^{-\lambda_u Z^a} A_{\boldsymbol{c}}(Z) \dd Z
        \\
        &\qquad= \lambda_u^{-1/a} \int_0^\infty e^{-W^a} A_{\boldsymbol{c}}(\lambda_u^{-1/a} W) \dd W
        \\
        &\qquad\sim \lambda_u^{-1/a} \int_0^\infty e^{-W^a} \left( (1/a)\log(\lambda_u)-\log(W)+O(1) \right) \dd W
        \\
&\qquad\sim
        \frac{\Gamma(1/a)}{a^2}\lambda_u^{-1/a}\log\lambda_u \\
&\qquad=
        \frac{2(1-a)\Gamma(1/a)}{a^2}u^{-2(1-a)/a}\log u .
\end{align*}
Multiplying by \(u^{-2}\) proves \eqref{eq:trend-integral-log}.  The linear trend changes only the bounded part of \(A_{\boldsymbol c}(Z)\), hence it does not change the coefficient of \(\log u\).
\end{proof}

\subsection{Proof of the main theorem}

\begin{proof}[Proof of \Cref{thm:main}]
By \ref{ass:variance} and the Borell--TIS inequality, the value of \[\PP\left\{\sup_{\bft\in[0,T]^2\backslash [0,\delta]^2}X(\bft)>u\right\}\] is exponentially smaller than \(\Psi(u)\) for all fixed \(\delta\).  Hence it is enough to consider \[\PP\left\{\sup_{\bft\in[0,\delta]^2}X(\bft)>u\right\}.\]

Recall the notation \[V(\bft)=t_1^\beta+t_2^\beta+t_1^a t_2^a.\]

First suppose \(0<a<a_0\).  By \Cref{lem:integrals},
\[
        u^{4/\alpha}
        \int_0^\delta\int_0^\delta e^{-u^2V(x,y)}\dd x\dd y
        =o\left(u^{2/\alpha-2/\beta}\right).
\]
Fix \(\lambda>0\).  On the complement of the two strips \(S_{1,u}(\lambda)\cup S_{2,u}(\lambda)\), both coordinates are at least \(\lambda q_u\).  The same union-bound argument as in \Cref{lem:rectangle-bounds}, but with no side blocks, gives an upper bound on $\PP\left\{\sup_{\bft\in[0,\delta]^2\backslash (S_{1,u}(\lambda) \cup S_{2,u}(\lambda)) }X(\bft)>u\right\}$ of order
\[
        O\left( \Psi(u)u^{4/\alpha}
        \int_0^\delta\int_0^\delta e^{-(1-o(1))u^2V(x,y)}\dd x\dd y \right)
        =o\left(u^{2/\alpha-2/\beta}\Psi(u)\right).
\]
Thus only the two thin side strips contribute to the upper bound, namely,
\[
	p(u)=\PP\left\{\sup_{\bft\in[0,T]^2}X(\bft)>u\right\}
	=\PP\left\{\sup_{\bft\in (S_{1,u}(\lambda) \cup S_{2,u}(\lambda)) }X(\bft)>u\right\}+o\left(u^{2/\alpha-2/\beta}\Psi(u)\right).
\]
The corner term in \eqref{eq:upper-sides} is also negligible because \(\alpha<\beta\) yet the integrals in \eqref{eq:upper-sides} are of order $u^{-2/\beta}$.  Therefore \Cref{lem:side-bounds}, \eqref{eq:side-integral}, and the limits
\[
        \lim_{M\to\infty}\frac{\calH_\alpha(M)}{M}=\calH_\alpha,
        \qquad
        \lim_{\lambda\downarrow0}\calH_\alpha(\lambda)=1
\]
give
\[
        \limsup_{u\to\infty}
        \frac{p(u)}{u^{2/\alpha-2/\beta}\Psi(u)}
        \le 2\calH_\alpha G_\beta .
\]
The lower bound follows from \eqref{eq:lower-sides} and \eqref{eq:side-integral}, by first sending \(M\to\infty\) and then \(\varepsilon\downarrow0\).  This proves \eqref{eq:main-side}.

Now suppose \(a_0\le a<\beta/2\).  By \Cref{lem:integrals},
\[
        I(u):=\int_0^\delta\int_0^\delta e^{-u^2V(x,y)}\dd x\dd y
        \sim
        \frac{2(\beta-2a)\Gamma(1/a)}{a^2\beta}
        u^{-2/a}\log u .
\]
The side and corner terms in \eqref{eq:upper-rectangles} are smaller than
\(u^{4/\alpha}I(u)\Psi(u)\); at \(a=a_0\), the logarithm makes the two-dimensional term larger than the side term.  Hence \eqref{eq:upper-rectangles} yields
\[
        \limsup_{u\to\infty}
        \frac{p(u)}{u^{4/\alpha-2/a}\log u\,\Psi(u)}
        \le
        \frac{\calH_\alpha(M)^2}{M^2}
        \frac{2(\beta-2a)\Gamma(1/a)}{a^2\beta(1-\varepsilon)^{1/a}}(1+\varepsilon).
\]
Letting \(M\to\infty\) and \(\varepsilon\downarrow0\) gives the desired upper bound.  The lower bound follows from \eqref{eq:lower-rectangles}, \eqref{eq:integral-with-cutoff}, and then the same limits by taking $R_u=\log(u)$, for example.  This proves \eqref{eq:main-log}.

If \(a=\beta/2\), then \Cref{lem:integrals} gives
\[
        I(u)\sim K_\beta u^{-4/\beta}.
\]
The two-dimensional term in \eqref{eq:upper-rectangles} is of order
\(u^{4/\alpha-4/\beta}\Psi(u)\), which dominates the side and corner terms because \(\alpha<\beta\).  The upper and lower bounds in \Cref{lem:rectangle-bounds} therefore give \eqref{eq:main-critical} after \(M\to\infty\) and \(\varepsilon\downarrow0\).

Finally, if \(a>\beta/2\), then
\[
        I(u)\sim G_\beta^2u^{-4/\beta},
\]
and the same argument proves \eqref{eq:main-classical}.
\end{proof}

\begin{proof}[Proof of Corollary~\ref{cor:trend}]
The proof is identical to the proof of \Cref{thm:main}, with one modification. Uniformly on a block based at \(\bfv\), the level is \(u+c_1t_1+c_2t_2=u+c_1v_1+c_2v_2+o(u^{-1})\), since $q_u=o(u^{-1})$ for $\alpha<2$, and on the relevant scales Mills' ratio gives the additional factor
\[
        \exp\{-u(c_1v_1+c_2v_2)
        +o(1)\}.
\]
Thus the variance integrals in \Cref{lem:integrals} are replaced by the trend integrals in \Cref{lem:trend-integrals}.  For \(\beta=2\), \eqref{eq:trend-side-integral} gives the side constants \(L(c_1)\) and \(L(c_2)\).  The two-dimensional estimates \eqref{eq:trend-integral-log}--\eqref{eq:trend-integral-classical} give the constants in \eqref{eq:trend-log}--\eqref{eq:trend-classical}.  The comparison of orders is the same as before, with \(a_0=2\alpha/(\alpha+2)\).  This proves all four assertions.
\end{proof}

\bibliographystyle{plainnat}
\bibliography{EEEA}

\end{document}